\renewcommand{\Im}[1]{{\mathfrak{Im}}(#1)}
\theoremstyle{definition}
\newtheorem{theorem}{Theorem}
\newtheorem{corollary}{Corollary}
\newtheorem{example}{Example}
\newtheorem{remark}{Remark}
\newtheorem{lemma}{Lemma}
\newtheorem*{definition}{Definition}
\DeclareMathOperator{\Fix}{fix}
\DeclareMathOperator{\lcm}{lcm}
\newcommand{\rot}{r}
\definecolor{codegreen}{rgb}{0,0.6,0}
\definecolor{codegray}{rgb}{0.5,0.5,0.5}
\definecolor{codepurple}{rgb}{0.58,0,0.82}
\definecolor{backcolour}{rgb}{0.95,0.95,0.92}
\lstdefinestyle{mystyle}{
    backgroundcolor=\color{backcolour},
    commentstyle=\color{codegreen},
    keywordstyle=\color{magenta},
    numberstyle=\tiny\color{codegray},
    stringstyle=\color{codepurple},
    basicstyle=\ttfamily\footnotesize,
    breakatwhitespace=false,
    breaklines=true,
    captionpos=b,
    keepspaces=true,
    numbers=left,
    numbersep=5pt,
    showspaces=false,
    showstringspaces=false,
    showtabs=false,
    tabsize=2
}
\def\inn{\subseteq}
\newcommand\gen[1]{\left\langle #1\right\rangle}
\newcommand\paren[1]{\left( #1\right)}
\newcommand\set[1]{\left\{ #1\right\}}
\newcommand\abs[1]{\left| #1\right|}
\newcommand\ideal[1]{\abs{\frac{\Sigma[X]}{(\Lambda, X^{#1} - 1)}}}
\newcommand\GCD[2]{\left(#1 : #2\right)}
\newcommand\Gast[2]{G_{#1,#2}}
\newcommand\Vast[2]{V_{#1,#2}}
\newcommand\East[2]{E_{#1,#2}}
\newcommand\Gnk[0]{\Gast{n}{k}}
\newcommand\Vnk[0]{\Vast{n}{k}}
\newcommand\Enk[0]{\East{n}{k}}
\newcommand\Fac[2]{F_{#2}(#1)}
\newcommand\Act[2]{A_{#2}(#1)}
\newcommand\Substr[3]{#1[#2..#3)}
\newcommand\One[1]{\mathbbm{1}_{#1}}
\def\Polu{U}
\def\R{\mathbb{R}}
\def\orde{\omega}
\def\xor{\text{x}}
\renewcommand{\Sigma}{\Gamma}
\begin{document}
\title{On extremal factors of de Bruijn-like graphs}

\date{August 30, 2023}

\author{
    \small Nicol\'as \'Alvarez
    \and
    \small
    Ver\'onica Becher
    \and
    \small Mart\'in Mereb
    \and
    \small Ivo Pajor
    \and
    \small Carlos Miguel Soto
}

\maketitle

\begin{abstract}
    In 1972 Mykkeltveit proved that the maximum number of vertex-disjoint cycles in the de Bruijn graphs of order $n$ is attained by the pure cycling register rule, as conjectured by Golomb. We generalize this result to the tensor product of the de Bruijn graph of order $n$ and a simple cycle of size $k$, when $n$ divides $k$ or vice versa.
    We also develop  counting formulae for a large family of cycling register rules, including the linear register rules proposed by Golomb.
\end{abstract}

\noindent
{\bf MSC Classification:} 05C35; 05C45.

\noindent
{\bf Keywords:} de Bruijn graph, Astute graph, pure cycling register, perfect necklaces

\section{Introduction and statement of results}
In~\cite[Chapter VII Conjecture A]{Golomb}  Golomb asked what is the  maximal number of vertex-disjoint cycles in the de Bruijn graph of order $n$. He conjectured that this is exactly the number of  cycles  attained using the pure cycling register  rule to partition the $n$-th de Bruijn graph.
This conjecture was proved by Mykkeltveit~\cite{Mukkelveit}, using Lempel's~\cite{Lempel} reformulation of the problem, which amounts to determining the minimum number of vertices which, if removed from the graph, will leave it with no cycles.

In this note we consider  Golomb's conjecture for a variant of the de Bruijn graph known as the {\em astute graph} for a fixed alphabet $\Sigma$.
These graphs are the tensor product of the de Bruijn graph of order $n$ and a simple cycle.
The Hamiltonian  cycles in the $(n,k)$-astute graph correspond to the so-called $(n,k)$-perfect necklaces introduced in~\cite{ABFY}.

The elements of $\Sigma^n$ are referred to as \emph{string}s of length $n$. The symbols in a string of length $n$ are numbered from $0$ to $n-1$.
The notation $s[i..j)$ for a string $s=a_0 a_1 \ldots a_{n-1}$ denotes the substring $a_i a_{i+1} \dots a_{j-1}$.

\begin{definition}[Astute graph]
    Given $n$ and $k$ positive integers, the astute graph is defined by $\Gnk = (\Vnk, \Enk)$, where $\Vnk = \Sigma^n \times \mathbb{Z}/k\mathbb{Z}$ and $\Enk$ is the set of all pairs
    \[((s, i), (t, j))\]
    \nopagebreak
    such that
    $s[1..n) = t[0..n-1)$
    and $j = i+1$.
\end{definition}

\begin{remark}
    Notice that $\Gast{n}{1}$ is the de Bruijn graph of order $n$. In this case, we identify the vertices $\Vast{n}{1}$ with $\Sigma^n$.
\end{remark}

\begin{definition}[Factor]
    A factor of $\Gnk$ is a set of vertex-disjoint cycles (directed circuits) which, together, include all the vertices of $\Gnk$. A factor which contains the maximum possible number of cycles is referred to as an {\em extremal factor}.
\end{definition}

To construct factors of de Bruijn and astute graphs,
we consider \emph{succession rules}.
These are what Golomb  in~\cite{Golomb} calls Shift Registers.

\begin{definition}[Succession rule]
    A succession rule is a bijective function $\sigma : \Sigma^n \to \Sigma^n$ such that for each string
    $s = a_0a_1\cdots a_{n-1}$,
    $\sigma(s) = a_1a_2\cdots a_{n-1} a_n$ for some $a_n \in \Sigma$.
\end{definition}

\begin{remark}
    The definition of succession rule implies that in the de Bruijn graph there exists an arc from $s$ to $\sigma(s)$, and in the astute graph there exists an arc from vertex $(s, i)$ to $(\sigma(s), i+1)$. This means that the succession rule $\sigma$ can be thought to act on the vertices of the de Bruijn and  astute graphs.
\end{remark}

\begin{definition}[Action of a succession rule on  astute graphs]
    Given a succession rule $\sigma$ and a positive integer $k$,  we  define an action $\Act{\sigma}{k} : \Vnk  \rightarrow \Vnk$ such that $\Act{\sigma}{k}(s,i) = (\sigma(s), i+1)$.
\end{definition}

Given a succession rule $\sigma$ and a positive integer $k$, the subgroup of permutations $\gen{\Act{\sigma}{k}}$ acts on $V_{n, k}$. For any vertex $v \in \Vast{n}{k}$, the arc $(v, \Act{\sigma}{k}(v))$ is in the graph $\Gast{n}{k}$. This implies that the orbits of this action are simple cycles on the astute graph.

\begin{definition}[Factor generated by succession rule]
    We denote $\Fac{\sigma}{k}$ as the factor composed of all orbits produced by $\Act{\sigma}{k}$.
\end{definition}

We interpret the alphabet $\Sigma$ as the ring $\mathbb{Z}/
    b\mathbb{Z}$ where $b = |\Sigma|$, therefore we can do linear arithmetic on its symbols.

\begin{definition}[Affine relation]
    A relation $R \subseteq \Sigma^{n+1}$ is said to be \emph{affine} if there exist  $c \in \Sigma$ and coefficients $(\lambda_i)_{0 \leq i \leq n} $, $\lambda_i\in \Sigma$, such that
    \[a_0 a_1 \cdots a_n \in R \iff c = \sum_{0 \leq i \leq n} \lambda_i a_i.\]
\end{definition}

\begin{definition}[Affine succession rule]
    An affine succession rule is a succession rule $\sigma : \Sigma^n \to \Sigma^n$ constructed from an affine relation $R$ as follows.
    For each string $a_0a_1\ldots a_{n-1}$,
    $\sigma(a_0a_1\ldots a_{n-1})$ is the unique string $a_1\ldots a_{n}$ such that $a_0a_1\ldots a_{n}$ is in $R$.
\end{definition}

\begin{remark}
    For an affine relation $R$ to give rise to an affine succession rule
    \begin{itemize}
        \item Each string must have at most one successor, which only happens if $\lambda_n$ is invertible; and
        \item The rule has to be bijective, which only happens if $\lambda_0$ is invertible.
    \end{itemize}
\end{remark}

\begin{example}[Pure Cycling Register]
    An example of an affine succession rule is the one given by string rotation. For any string $s = a_0a_1\cdots a_{n-1}$ we define
    \[
        \rot_n(s) = a_1\cdots a_{n-1}a_0,
    \]
    so, $a_n=a_0$.
    For $k=1$, the de Bruijn case, $\Fac{\rot_n}{1}$ is the set of necklaces of length $n$. Namely, the equivalence classes of $\Sigma^n$ under string rotation.
\end{example}

\begin{example}[Incremented Cycling Register]
    Another example of an affine succession rule is \emph{incremented rotation}. For any string $s = a_0a_1\cdots a_{n-1}$ we define
    \[
        \iota_n(s) = a_1\cdots a_{n-1}(a_0+1),
    \]
    so, $a_n=a_0+1$.
    An advantage of this particular succession rule $\iota$ is that each orbit has an equal quantity of each symbol in $\Sigma$. This has applications in the construction of de Bruijn sequences  with  small  discrepancy,  see~\cite{Huang} in contrast to~\cite{CooperHeitsch}.
\end{example}

\begin{example}[Xor Cycling Register]
    The third example we consider is restricted to the special case $|\Sigma| = 2$, where the ring addition operation is the \emph{xor}. In this ring, we define the succession rule for $s = a_0a_1\cdots a_{n-1}$ as
    \[
        \xor_n(s) = a_1\cdots a_{n-1}a_n
    \]
    where $a_n = a_1 + a_2 + a_3 + \cdots + a_{n-1}$.
\end{example}

We now state the main result  of this note:
\begin{theorem}\label{thm:extremal}
    Let $n$ and $k$ be positive integers such that $k$ divides $n$ or $n$ divides $k$. The factor $\Fac{\rot_n}{k}$ produced by the pure cycling register rule $r_n$ is extremal.
\end{theorem}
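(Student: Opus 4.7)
I would treat the two divisibility cases in the hypothesis separately, by different methods. As a preliminary, I would compute $\abs{\Fac{r_n}{k}}$ by partitioning $\Sigma^n$ into rotation orbits of $r_n$: a rotation orbit of size $d$ (with $d\mid n$) combines with $\mathbb{Z}/k\mathbb{Z}$ to give $dk$ vertices which, under $\Act{r_n}{k}$, decompose into $\gcd(d,k)$ cycles of length $\lcm(d,k)$, yielding
\[
\abs{\Fac{r_n}{k}} \;=\; \sum_{d\mid n} M(d)\,\gcd(d,k),
\]
where $M(d)$ denotes the number of primitive necklaces of length $d$ over $\Sigma$.

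The case $n\mid k$ is nearly trivial. Any cycle of any factor of $\Gnk$ must have length divisible by $k$, since the $\mathbb{Z}/k\mathbb{Z}$ coordinate has to return to its starting value; hence the number of cycles in any factor is at most $\abs{\Vnk}/k = \abs{\Sigma}^n$. Since $n\mid k$ forces $\gcd(d,k)=d$ for every $d\mid n$, the preliminary formula collapses to $\sum_{d\mid n} d\,M(d) = \abs{\Sigma}^n$, matching the upper bound.

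The substantive case is $k\mid n$, for which my plan is to reduce to Mykkeltveit's theorem over an enlarged alphabet. Writing $n = mk$ and viewing $\Sigma^n$ as $(\Sigma^k)^m$, a factor $F$ of $\Gnk$ is equivalent to a $k$-tuple $(\sigma_0,\dots,\sigma_{k-1})$ of succession rules on $\Gast{n}{1}$ via $F(s,i)=(\sigma_i(s),i+1)$. The composition $\tau = \sigma_{k-1}\circ\cdots\circ\sigma_0$ is a bijection of $\Sigma^n$ that shifts $k$ input characters to output position, hence a succession rule on the block de Bruijn graph $B_{\Sigma^k}(m)$ over the enlarged alphabet $\Sigma^k$. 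Next I would argue that each cycle of $F$ has length divisible by $k$ and meets $\Sigma^n\times\{0\}$ in exactly one $\tau$-orbit, producing a bijection between cycles of $F$ and $\tau$-orbits on $\Sigma^n$. Applying Mykkeltveit's theorem to $B_{\Sigma^k}(m)$, the maximum number of orbits over succession rules on that graph is attained by its pure cycling register $r_m$; and $r_m$ on $B_{\Sigma^k}(m)$ coincides with $r_n^k$ in $\Sigma$-coordinates, which is precisely the composition arising from the tuple $(r_n,\dots,r_n)$, i.e.\ from the factor $\Fac{r_n}{k}$. Hence $\abs{F} \le \abs{\Fac{r_n}{k}}$ for every factor $F$, proving extremality.

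The main obstacle I foresee is identifying the right reduction: recognizing that factors of $\Gnk$ correspond to $k$-tuples of succession rules on $\Gast{n}{1}$, and that their compositions land inside succession rules on the block de Bruijn graph $B_{\Sigma^k}(n/k)$. Once this correspondence is in place, the theorem reduces to Mykkeltveit's theorem in its alphabet-independent form, bypassing any need to redevelop the sector and complex-weight arguments of Mykkeltveit and Lempel. The easier case $n\mid k$ presents no extra difficulty beyond the length-$k$ lower bound on cycle lengths.
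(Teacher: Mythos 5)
Your proposal is correct, and for the substantive case $k \mid n$ it takes a genuinely different route from the paper. The paper does not invoke Mykkeltveit as a black box: it re-runs the Lempel--Mykkeltveit argument directly on the astute graph, assigning each string the complex weight $C(s)=\sum_i a_i\mu^i$, showing that $C$ is rotated by $\mu^{-1}$ under $\rot_n$, sums to zero over any cycle of $\Gast{n}{k}$, and changes by a real number along any de Bruijn arc, and then marking in each PCR cycle the distinguished vertex where $\Im{C}$ first becomes negative; every cycle of every factor is shown to contain a distinguished vertex. Your reduction --- splitting a factor of $\Gast{n}{k}$ into a $k$-tuple of succession rules $(\sigma_0,\dots,\sigma_{k-1})$, identifying its cycles with the orbits of $\tau=\sigma_{k-1}\circ\cdots\circ\sigma_0$ on the fiber over $0$, and recognizing $\tau$ as a succession rule of the order-$(n/k)$ de Bruijn graph over the block alphabet $\Sigma^k$, with $\tau=\rot_n^k$ equal to the block-alphabet PCR when every $\sigma_i=\rot_n$ --- is sound: I checked the orbit bijection and your count $\sum_{d\mid n}M(d)\gcd(d,k)$, which agrees with Corollary~\ref{cor:coro1}. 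What your route buys is a short proof with no Fourier analysis; what it costs is the need for Mykkeltveit's theorem over an arbitrary alphabet, since your block alphabet has size $|\Sigma|^{k}$, whereas Golomb's conjecture and Mykkeltveit's original paper are stated for the binary de Bruijn graph. You should either cite a source for the general-alphabet statement or remark that the argument (e.g.\ the $k=1$ instance of this paper's proof) carries over verbatim to any alphabet; the paper avoids this issue by proving the needed de Bruijn bound itself. The case $n\mid k$ is handled essentially identically in both proofs: every cycle of $\Gast{n}{k}$ has length a multiple of $k$, and the PCR cycles all have length exactly $k$.
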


When $k$ does not divide $n$ Theorem~\ref{thm:extremal} is not necessarily true. For the case $k=2$, $n=3$ and $\Sigma = \set{0, 1}$, the successor rule $r_3$ produces a factor of size $4$ as shown in Figure~\ref{fig:counterexample_pcr}, while the extremal factors have size $6$. An example of such extremal factor is shown in Figure~\ref{fig:counterexample_extremal}. For such cases, where the hypothesis of Theorem~\ref{thm:extremal} do not hold, it remains an open problem to characterize the values of $k$ and $n$ where the conclusion does hold.
\medskip

The second result in this note is a closed formula for  the size of the factors generated by affine rules.

We use $(a : b)$ for  the greatest common divisor of the integers $a$ and $b$.
We write     $(P,Q)$ for the  ideal generated by $P $ and $Q$.
\begin{theorem}\label{thm:count}
    Let $n$ and $k$ be positive integers, and $R$ be an affine rule given by
    \[a_0 a_1 \cdots a_n \in R \iff c = \sum_{0 \leq i \leq n} \lambda_i a_i,\]
    for some coefficients $(\lambda_i)_i$ and a constant term $c$.
    Then the number of factors in the associated succession rule $\sigma$ is given by
    \begin{equation}\nonumber
        |\Fac{\sigma}{k}| = \frac{k\GCD{s}{ \orde}}{s\orde } \sum_{\GCD{s}{\orde} | d | \orde} \varphi \paren{\orde / d}\ideal{d}
    \end{equation}
    where
    \begin{itemize}
        \item  $\Lambda = \sum_{i=0}^n \lambda_i X^{n-i}$ is the characteristic polynomial of $R$,

        \item  $\orde$ is any multiple of the order of $X$ modulo $\Lambda$,

        \item  $\varphi$ is Euler's totient function, and

              $s$ is the length of the smallest cycle in the factor. Equivalently, $s$ can be defined as the smallest multiple of $k$ such that
              \[
                  c(1 + X + \cdots + X^{s-1}) \in (\Lambda, X^s-1).
              \]
    \end{itemize}
\end{theorem}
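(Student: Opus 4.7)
The plan is to apply Burnside's lemma to the cyclic group generated by $A_\sigma^k$ acting on $V_{n,k}$. After identifying $\Sigma^n$ with $\Sigma[X]/(\Lambda)$ so that $\sigma$ becomes the affine map $p\mapsto Xp+c$, the iterate $\sigma^j$ equals $p\mapsto X^j p+c\gamma_j$ with $\gamma_j:=1+X+\cdots+X^{j-1}$. A fixed point of $\sigma^j$ is a solution of the linear equation $(X^j-1)p=-c\gamma_j$ in $\Sigma[X]/(\Lambda)$. Applying the first isomorphism theorem to multiplication by $X^j-1$ shows that the kernel and cokernel have the same size $\ideal{j}$, so this equation has either $0$ or $\ideal{j}$ solutions; it is solvable iff $c\gamma_j\in(\Lambda, X^j-1)$, matching the paper's algebraic characterization. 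Since $(A_\sigma^k)^j(p,i)=(\sigma^j(p), i+j)$ is fixed iff $k\mid j$ and $\sigma^j(p)=p$, one verifies that $s$ is both the length of the smallest $V_{n,k}$-cycle and the smallest positive multiple of $k$ at which $\sigma^j$ has a fixed point. Note $\orde$ is well-defined because $\lambda_n$ is a unit, so $X$ is invertible in $\Sigma[X]/(\Lambda)$ and has finite order.

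The key structural step is to show that $J:=\{j\geq 0:\sigma^j\text{ has a fixed point}\}$ is a subgroup of $\Z$, equivalently that every $\sigma$-orbit length is a multiple of $s^*:=\min J_{>0}$. Fix $p^*$ with $\sigma^{s^*}(p^*)=p^*$ and work in shifted coordinates $q=p-p^*$: iterating $\sigma^{a s^*}(p^*+q)=p^*+X^{a s^*}q$ and decomposing $j=a s^*+r$ with $0\leq r<s^*$, one obtains $\sigma^j(p^*+q)=\sigma^r(p^*)+X^j q$, so $\sigma^j(p)=p$ forces $\sigma^r(p^*)-p^*$ to lie in the image of multiplication by $X^j-1$ on $\Sigma[X]/(\Lambda)$. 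The minimality of $s^*$ together with the structure of the $(X-1)$-primary component of $\Sigma[X]/(\Lambda)$ rule this out for $0<r<s^*$. Hence $J\cap k\Z=s\Z$, so the contributions to Burnside's sum come exactly from multiples of $s$.

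Burnside's formula then gives $|\Fac{\sigma}{k}|=(k/N)\sum_{0\leq r<N/s}\ideal{rs}$ with $N=\lcm(s,\orde)$. Using $\Lambda\mid X^\orde-1$ and the identity $(X^a-1,X^b-1)=(X^{(a:b)}-1)$, one rewrites $\ideal{rs}=|\Sigma[X]/(\Lambda,X^{\GCD{rs}{\orde}}-1)|$. Setting $g=\GCD{s}{\orde}$, $\orde=g\orde'$, $s=g s'$ with $\GCD{s'}{\orde'}=1$, we have $\GCD{rs}{\orde}=g\GCD{r}{\orde'}$; as $r$ ranges over $\{0,\ldots,\orde'-1\}$, each divisor $d'$ of $\orde'$ arises $\varphi(\orde'/d')$ times. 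Substituting $d=gd'$ and $k/N=kg/(s\orde)$ reproduces the stated formula. The main obstacle is the subgroup claim in the second paragraph; the remainder is a routine combination of Burnside's lemma and gcd manipulations in $\Sigma[X]$.
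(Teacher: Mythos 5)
Your overall architecture is the same as the paper's: Burnside's lemma applied to $\gen{\Act{\sigma}{k}}$, the reduction $|\Fix(\Act{\sigma}{k}^i)|=k\,\One{k\mid i}\,|\Fix(\sigma^i)|$, the count $|\Fix(\sigma^j)|=\ideal{j}$ when a fixed point exists together with the solvability criterion $cU_j\in(\Lambda,X^j-1)$, the claim that the set of exponents admitting fixed points consists of the multiples of its least element, and the closing gcd/totient bookkeeping (which you carry out correctly). Your route to the fixed-point count is mildly different and arguably cleaner: you work in $\Sigma[X]/(\Lambda)$ and use the kernel--cokernel equality for the endomorphism $X^j-1$, where the paper instead passes to the associated string $w\in\Sigma^k$ and solves $\Lambda w\equiv cU_k$ in $\Sigma[X]/(X^k-1)$. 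One small inaccuracy: with the paper's convention $\Lambda=\sum_i\lambda_iX^{n-i}$, the shift is conjugate to $p\mapsto X^{-1}p+\gamma$ with $\gamma$ a unit multiple of $c$, not to $p\mapsto Xp+c$; this is harmless because $X$ is invertible modulo $\Lambda$, but the identification should be stated correctly (and justified, since $\Sigma=\Z/b\Z$ need not be a field).

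The genuine gap is exactly where you flag it. Your shifted-coordinates computation correctly reduces the subgroup claim for $J=\{j:\sigma^j\text{ has a fixed point}\}$ to ruling out $(X^r-1)p^*+cU_r\in(\Lambda,X^j-1)$ for $0<r<s^*$, but the sentence invoking ``the structure of the $(X-1)$-primary component of $\Sigma[X]/(\Lambda)$'' is not an argument, and over $\Z/b\Z$ with $b$ composite there is no primary decomposition to appeal to. What actually closes this step is a gcd identity for the polynomials $U_m=(X^m-1)/(X-1)$ over an arbitrary commutative ring: writing $g=(j:s^*)$, membership $j,s^*\in J$ gives $cU_j,\,cU_{s^*}\in(\Lambda,X^g-1)$, and the Euclidean descent $U_m-X^{m-n}U_n=U_{m-n}$ (Lemma~\ref{lemma:unum}) shows $U_g\in(U_j,U_{s^*})$, hence $cU_g\in(\Lambda,X^g-1)$ and $g\in J$, contradicting the minimality of $s^*$ unless $s^*\mid j$. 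Without this lemma or an equivalent substitute, the one step you yourself identify as the main obstacle does not go through; with it, the remainder of your outline is correct.
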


\begin{center}
    \begin{figure}[h!]
        \begin{subfigure}{0.45\textwidth}
            \includegraphics[width=0.99\linewidth]{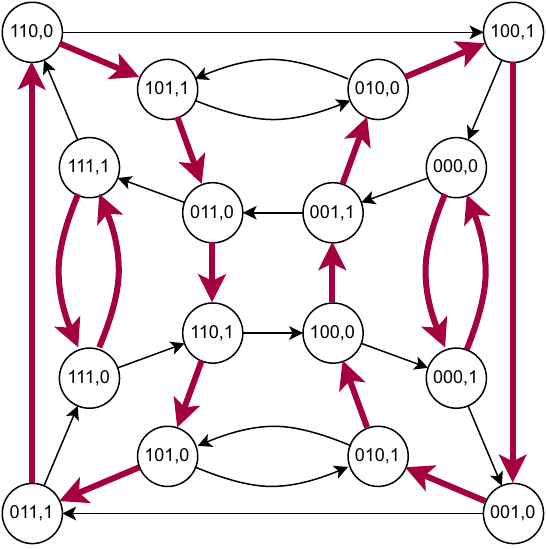}
            \caption{Factor $F_2(\rot_3)$ produced by Pure Cycling Register rule for $\Sigma = \set{0, 1}$. The arcs of the cycles in the factor are shown in magenta. There are 4 cycles in this factor.}
            \label{fig:counterexample_pcr}
        \end{subfigure}
        \quad\quad\quad
        \begin{subfigure}{0.45\textwidth}
            \includegraphics[width=0.99\linewidth]{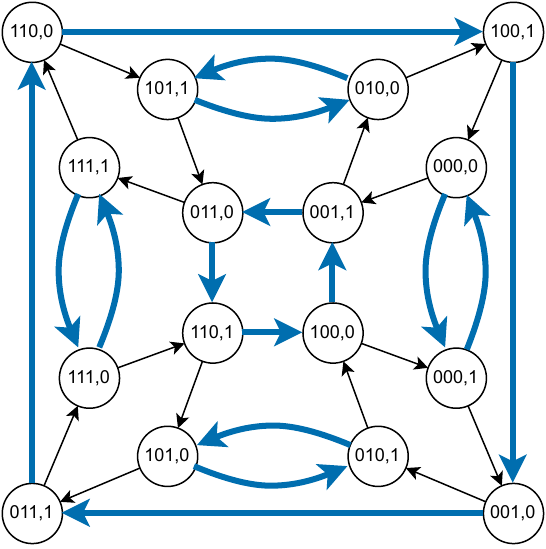}
            \caption{Extremal factor of $\Gast{3}{2}$ for the alphabet $\Sigma = \set{0, 1}$. The arcs of the cycles in the factor are shown in blue. There are 6 cycles in this factor, making it extremal.}
            \label{fig:counterexample_extremal}
        \end{subfigure}
        \caption{Pure Cycling Register induced factors may not be  extremal in astute graphs}
        \label{fig:image2}
    \end{figure}
\end{center}

The next corollaries give the number of elements in the factors determined by two specific succession rules.

\begin{corollary}[Factor of $\Gnk$ from Pure Cycling Register]
    \label{cor:coro1}
    \[
        |\Fac{\rot_n}{k}| =\frac{\GCD{n}{ k}}{n} ~\cdot~\sum_{\GCD{n}{ k}~\mid d~\mid n}~\varphi(n/d) |\Sigma|^d.
    \]
\end{corollary}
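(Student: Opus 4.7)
The plan is to specialize Theorem~\ref{thm:count} to the succession rule $\rot_n$. First, I would identify $\rot_n$ as the affine rule associated with the relation $a_n - a_0 = 0$: explicitly $\lambda_0 = -1$, $\lambda_n = 1$, every other $\lambda_i = 0$, and constant term $c = 0$. Consequently the characteristic polynomial is $\Lambda = 1 - X^n$, which generates the same ideal of $\Sigma[X]$ as $X^n - 1$, so we may take $\Lambda = X^n - 1$.

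Next I would read off the auxiliary quantities appearing in the theorem. The order of $X$ modulo $X^n - 1$ is exactly $n$, so I choose $\orde = n$. Because $c = 0$, the defining congruence $c(1 + X + \cdots + X^{s-1}) \in (\Lambda, X^s - 1)$ is satisfied trivially for every positive multiple of $k$, forcing $s = k$; this is consistent with the direct observation that every constant string yields a cycle of length exactly $k$ in $\Gast{n}{k}$ under $\Act{\rot_n}{k}$. Finally, for each divisor $d$ of $n$, the polynomial Euclidean algorithm applied to the monic polynomials $X^n - 1$ and $X^d - 1$ in $\Sigma[X]$ mimics the integer Euclidean algorithm on the exponents and produces
\[
(\Lambda,\, X^d - 1) \;=\; (X^n - 1,\, X^d - 1) \;=\; (X^{\GCD{n}{d}} - 1) \;=\; (X^d - 1),
\]
so $\ideal{d} = \abs{\Sigma[X]/(X^d - 1)} = \abs{\Sigma}^d$.

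Substituting $\orde = n$, $s = k$, and the value of $\ideal{d}$ into Theorem~\ref{thm:count} yields
\[
\abs{\Fac{\rot_n}{k}} \;=\; \frac{k \cdot \GCD{k}{n}}{k \cdot n} \sum_{\GCD{k}{n} \mid d \mid n} \varphi(n/d)\,\abs{\Sigma}^d \;=\; \frac{\GCD{n}{k}}{n} \sum_{\GCD{n}{k} \mid d \mid n} \varphi(n/d)\,\abs{\Sigma}^d,
\]
which is the formula claimed by the corollary.

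The only substantive point is the ideal identity $(X^n - 1,\, X^d - 1) = (X^{\GCD{n}{d}} - 1)$ in $\Sigma[X]$: this does not hold over arbitrary rings, but it does hold here because both generators are monic, so long division with remainder in $\Sigma[X]$ is well defined and proceeds exactly as over a field. Everything else is a direct substitution into the master formula.
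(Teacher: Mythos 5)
Your proposal is correct and follows essentially the same route as the paper: specialize Theorem~\ref{thm:count} with $\Lambda = X^n-1$, $c=0$ (hence $s=k$), $\orde = n$, and the ideal identity $(X^n-1, X^d-1)=(X^d-1)$ for $d \mid n$ giving $\ideal{d}=|\Sigma|^d$. Your remark that the Euclidean-algorithm argument for the ideal identity works over $\Sigma[X]$ because the generators are monic is exactly the content of the paper's Lemmas~\ref{lemma:unum} and~\ref{lemma:xnxm}, so nothing is missing.
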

\begin{remark}
    When $k=1$ and $|\Sigma|=2$, $|\Fac{\rot_n}{1}| $ is the
    number of binary irreducible polynomials whose degree divides $n$,
    see \cite{a31}.
\end{remark}

\begin{corollary}[Factor of $\Gnk$ from Incremented Cycling Register]
    \label{thm:coro2}
    \begin{equation}\nonumber
        |\Fac{\iota_n}{k}| = \frac{k\GCD{\lcm(k, b d_b(n))}{n}}{\lcm(k, b d_b(n))n} \sum_{\GCD{\lcm(k, b d_b(n))}{n} | d | n} \varphi (n/d) b^d
    \end{equation}
    where $b=|\Sigma|$ and $d_b(n)$ is the smallest divisor of $n$ such that $n/d_b(n)$ is coprime with $b$.

\end{corollary}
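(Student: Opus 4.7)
The plan is to specialize Theorem~\ref{thm:count} to the affine data underlying $\iota_n$, computing the four ingredients $\Lambda$, $\orde$, $s$, and $\ideal{d}$ explicitly. Reading off the defining relation $a_n = a_0 + 1$ gives coefficients $\lambda_0 = -1$, $\lambda_n = 1$ (others zero) and constant term $c = 1$, so $\Lambda = -X^n + 1$ generates the same ideal as $X^n - 1$. This immediately yields two ingredients: since $X^n \equiv 1 \pmod{X^n - 1}$ and no smaller power of $X$ is congruent to $1$, I take $\orde = n$; and since any divisor $d$ of $n$ has $X^d - 1$ dividing $X^n - 1$ in $\Sigma[X]$, the ideal $(\Lambda, X^d - 1)$ equals $(X^d - 1)$, and hence $\ideal{d} = b^d$ (the quotient is $\Sigma$-free with basis $1, X, \ldots, X^{d-1}$).

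The main step is to identify $s = \lcm(k, b\,d_b(n))$. I proceed in two parts. First, I simplify $(X^n - 1, X^s - 1)$ to the principal ideal $(X^g - 1)$ with $g = \GCD{s}{n}$: one containment holds because $X^g - 1$ divides both generators; the reverse follows by iterating $X^n - 1 \equiv X^{n \bmod s} - 1 \pmod{X^s - 1}$, i.e.\ the Euclidean algorithm on exponents, or equivalently by observing that in the quotient ring $X$ is a unit whose order divides both $n$ and $s$, hence divides $g$. Second, I group $1 + X + \cdots + X^{s-1}$ into $s/g$ consecutive blocks of length $g$ and reduce each modulo $X^g - 1$, obtaining $(s/g)(1 + X + \cdots + X^{g-1})$, a polynomial of degree strictly less than $g$. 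Hence the sum lies in $(X^g - 1)$ exactly when its coefficients vanish in $\Sigma$, that is, when $b \mid s/g$.

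The final step is a $p$-adic translation. Analyzed prime by prime, the condition $b \mid s/(s:n)$ forces $v_p(s) \geq v_p(b) + v_p(n) = v_p(bn)$ for every prime $p \mid b$. The smallest positive $s$ meeting all such bounds is $\prod_{p \mid b} p^{v_p(bn)} = b\,d_b(n)$, using $d_b(n) = \prod_{p \mid b} p^{v_p(n)}$. Combined with $k \mid s$, I conclude $s = \lcm(k, b\,d_b(n))$. Plugging $\orde = n$, this value of $s$, and $\ideal{d} = b^d$ into the formula of Theorem~\ref{thm:count} yields the announced expression. I expect the main obstacle to be the ideal simplification $(X^n - 1, X^s - 1) = (X^{(s:n)} - 1)$ in $\Sigma[X]$, since $\Sigma = \mathbb{Z}/b\mathbb{Z}$ is not a field in general; once that is in hand, the remainder is careful $p$-adic bookkeeping.
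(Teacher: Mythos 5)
Your proposal is correct and follows essentially the same route as the paper: specialize Theorem~\ref{thm:count} with $\Lambda = X^n - 1$, $c=1$, $\orde = n$, $\ideal{d} = b^d$, and reduce $1 + X + \cdots + X^{s-1}$ modulo $X^{(s:n)} - 1$ to arrive at the condition $b \mid s/(s:n)$. The only difference is that you resolve this last divisibility condition by $p$-adic valuations, whereas the paper manipulates gcds directly; both correctly yield $s = \lcm(k, b\,d_b(n))$.
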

\begin{remark}
    When $k=1$ and $|\Sigma|=2$, $|\Fac{\iota_n}{1}| $
    is the  number of distinct  output sequences
    from binary $n$-stage shift register which feeds back the complement of the last stage,
    see \cite{a16}.
\end{remark}

\begin{corollary}[Factor of $\Gnk$ from Xor Cycling Register]
    \label{cor:coro3}
    \begin{equation}\nonumber
        |\Fac{\xor_n}{k}| = \frac{k}{2(n+1)} \sum_{d | n+1} \varphi(2d) 2^{(n+1)/d}.
    \end{equation}
\end{corollary}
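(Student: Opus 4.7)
The plan is to derive the formula by applying Theorem~\ref{thm:count} to $\sigma=\xor_n$. Reading off the defining affine relation over $\mathbb{F}_2$ one has $\lambda_0=\lambda_1=\cdots=\lambda_n=1$ and constant $c=0$, so the characteristic polynomial is
\[
\Lambda \;=\; 1+X+X^2+\cdots+X^n \;=\; \frac{X^{n+1}-1}{X-1}\in\mathbb{F}_2[X].
\]
Since $(X-1)\Lambda=X^{n+1}-1$, the element $X$ satisfies $X^{n+1}\equiv 1\pmod\Lambda$, so the order of $X$ modulo $\Lambda$ divides $n+1$. Theorem~\ref{thm:count} allows any multiple; I would take $\orde=2(n+1)$, since that is the choice that naturally produces the denominator $2(n+1)$ and the $\varphi(2d)$ shape appearing in the target formula. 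Because $c=0$, the condition $c(1+X+\cdots+X^{s-1})\in(\Lambda,X^s-1)$ is automatic, so the minimum cycle length is $s=k$.

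The main computational step is the evaluation of $\ideal{d}=|\mathbb{F}_2[X]/(\Lambda,X^d-1)|$ for each divisor $d\mid 2(n+1)$ appearing in the sum. Write $n+1=2^am$ with $m$ odd. Over $\overline{\mathbb{F}_2}$ we have $X^{n+1}-1=(X^m-1)^{2^a}$, so $\Lambda=(X^{n+1}-1)/(X-1)$ has $1$ as a root with multiplicity $2^a-1$ and each non-trivial $m$-th root of unity with multiplicity $2^a$. Decomposing $X^d-1$ analogously via $d=2^b\ell$ with $\ell$ odd, the gcd $\gcd(\Lambda,X^d-1)$ is read off by comparing multiplicities factor by factor, yielding a closed-form expression for $\ideal{d}$.

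Finally, I would substitute into Theorem~\ref{thm:count}: the prefactor $\frac{k(s:\orde)}{s\,\orde}$ simplifies to $\frac{k}{2(n+1)}$, and the relevant divisors of $2(n+1)$ can be re-indexed as $2d$ with $d\mid n+1$, so that the sum becomes one indexed by divisors of $n+1$. Combining the Euler totient factors via the identity $\varphi(2d)=\varphi(d)$ for odd $d$ and $\varphi(2d)=2\varphi(d)$ for even $d$ with the closed-form value of $\ideal{d}$ yields the compact expression $\sum_{d\mid n+1}\varphi(2d)\,2^{(n+1)/d}$ in the statement. The main obstacle is the characteristic-$2$ gcd calculation: the multiplicity $2^{v_2(n+1)}-1$ of $X+1$ in $\Lambda$ has to be tracked against the multiplicity of $X+1$ in $X^d-1$, and it is this matching, together with the re-indexing, that converts the unsimplified output of Theorem~\ref{thm:count} into the closed form of the corollary.
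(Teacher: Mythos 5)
Your setup coincides with the paper's ($\Lambda=U_{n+1}=1+X+\cdots+X^n$ over $\mathbb{F}_2$, constant $c=0$, Theorem~\ref{thm:count} as the engine), but as written the proposal has two genuine gaps. First, the step producing the stated prefactor does not follow from your own choices. Having taken $s=k$ (which is what the definition of $s$ in Theorem~\ref{thm:count} gives for a linear rule, and what the paper itself does in Corollary~\ref{cor:coro1}), the theorem's prefactor is $\frac{k\GCD{k}{\orde}}{k\orde}=\frac{\GCD{k}{2(n+1)}}{2(n+1)}$ and the sum is restricted to $\GCD{k}{2(n+1)}\mid d\mid 2(n+1)$; this equals your claimed $\frac{k}{2(n+1)}$ with an unrestricted sum only when $k\mid 2(n+1)$, and you give no argument for removing the restriction on the range of $d$. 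The paper's proof of this corollary instead takes the smallest cycle length to be $1$ (``since the rule is linear''), which is how the bare factor $k$ and the unrestricted divisor sum arise; you cannot simultaneously keep $s=k$ and assert that prefactor --- one of the two must be justified, and your text does neither.

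Second, the computational heart of the corollary is only sketched. The paper evaluates the quotient sizes by its Lemma~\ref{lemma:unxm}: for $d\mid n+1$ one has $(\Lambda,X^d-1)=(X^d-1)$ if $(n+1)/d$ is even and $(U_d)$ if it is odd, hence $|\Gamma[X]/(\Lambda,X^d-1)|=2^{\,d-1+\One{2\mid (n+1)/d}}$, and then the change of variables $d\mapsto (n+1)/d$ together with $\varphi(d)\,2^{\One{2\mid d}}=\varphi(2d)$ produces the factor $2$ in the denominator and the $\varphi(2d)$ --- these come from that identity, not from enlarging $\orde$. Your alternative, taking $\orde=2(n+1)$ and computing $\gcd(\Lambda,X^d-1)$ by root multiplicities over $\overline{\mathbb{F}_2}$ with $n+1=2^am$, is admissible in principle (Theorem~\ref{thm:count} allows any multiple of the order, and the multiplicity bookkeeping is essentially a reproof of Lemma~\ref{lemma:unxm}), but you never state the resulting closed form for $|\Gamma[X]/(\Lambda,X^d-1)|$, and the final re-indexing is wrong as described: the divisors of $2(n+1)$ are not all of the form $2d$ with $d\mid n+1$ (odd divisors occur and contribute strictly positive terms), so collapsing the sum over $d\mid 2(n+1)$ onto a sum over $d\mid n+1$ requires merging terms, e.g.\ using that the ideal $(\Lambda,X^d-1)$ depends only on $\GCD{d}{\orde'}$ for the true order $\orde'$ of $X$ modulo $\Lambda$. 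As it stands, the two decisive steps --- the evaluation of the quotient sizes and the divisor-sum manipulation --- are asserted rather than carried out.
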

\begin{remark}
    When $k=1$ and $|\Sigma|=2$, $|\Fac{\xor_n}{1}| $ is the
    number of output sequences from $(n-1)$-stage shift register which feeds back the mod $2$ sum of the contents of the register, see \cite{a13}.
\end{remark}

\section{Proof of Theorem~\ref{thm:extremal}}
When $n$ divides $k$, the result follows from the fact that the pure cycle register produces a factor where each cycle has length exactly $k$, which is also the smallest possible length of a cycle in the graph $\Gnk$. Let us then consider the case $k$ divides $n$.

We use a basic tool from finite Fourier analysis  \cite{Beck}.

\begin{definition}[Discrete Fourier Transform]
    Let $\mu = e^{{2\pi}/{n}}$ be a primitive root of unity of order~$n$.
    Let us define $C: \Sigma^n \to \mathbb{C}$ as
    \[
        C(a_0\ldots a_{n-1}) = \sum_{i = 0}^{n-1} a_i\mu^i.
    \]
\end{definition}
\begin{lemma}
    \label{thm:caballo}
    $C(\rot_n(s)) = \mu^{-1} C(s)$.
\end{lemma}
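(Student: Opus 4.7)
The plan is a direct computation unpacking the definitions of $C$ and $\rot_n$, together with the single identity $\mu^n = 1$. There is no real obstacle; the only care needed is the index shift introduced by the wrap-around of the last symbol.

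Write $s = a_0 a_1 \cdots a_{n-1}$, so that $\rot_n(s) = b_0 b_1 \cdots b_{n-1}$ with $b_i = a_{i+1}$ for $0 \le i \le n-2$ and $b_{n-1} = a_0$. Then by the definition of the discrete Fourier transform,
\[
    C(\rot_n(s)) = \sum_{i=0}^{n-1} b_i \mu^i = \sum_{i=0}^{n-2} a_{i+1}\mu^i + a_0 \mu^{n-1}.
\]
Reindexing the first sum via $j = i+1$ and using $\mu^{n-1} = \mu^{-1}$ (because $\mu^n = 1$), the right hand side becomes
\[
    \sum_{j=1}^{n-1} a_j \mu^{j-1} + a_0 \mu^{-1} \;=\; \mu^{-1}\!\left( \sum_{j=1}^{n-1} a_j \mu^{j} + a_0 \right) \;=\; \mu^{-1} \sum_{j=0}^{n-1} a_j \mu^j \;=\; \mu^{-1} C(s),
\]
which is the claimed identity.

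So the entire argument fits on one line once the wrap-around term is absorbed using $\mu^{-1} = \mu^{n-1}$. The only thing worth flagging is that the symbols $a_i$ live in $\Sigma = \mathbb{Z}/b\mathbb{Z}$ but are interpreted as integer representatives inside $C$, which is harmless for this statement since $C$ is defined directly by that formula. The lemma will later be iterated (so that $C(\rot_n^t(s)) = \mu^{-t} C(s)$), which is the actual content needed to control the cycle structure of $\Fac{\rot_n}{k}$ via Fourier analysis.
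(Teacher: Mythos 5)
Your proof is correct and follows essentially the same route as the paper: expand $C(\rot_n(s))$ term by term, reindex, and absorb the wrap-around term using $\mu^{n-1} = \mu^{-1}$. Nothing further is needed.
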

\begin{proof}
    Let $s = a_0a_1\cdots a_{n-1}$. Then $\rot(s) = a_1a_2\cdots a_n a_0$. Then, we have:
    \[
        C(\rot(s)) = a_0\mu^{n-1} + \sum_{i = 1}^{n-1} a_i \mu^{i-1}.
    \]
    Since $\mu^{n-1} = \mu^{-1}$, we obtain
    \[
        C(\rot(s)) = \sum_{i = 0}^{n-1} a_i \mu^{i-1} = \mu^{-1} \sum_{i = 0}^{n-1} a_i \mu^i = \mu^{-1} C(s).
    \]
\end{proof}

\begin{lemma}
    \label{thm:pez}
    Let $(s_0, m_0), ... (s_{t-1}, m_{t-1})$
    be the vertices of any cycle in the astute graph
    $\Gnk$. Then $\sum C(s_i) = 0$.
\end{lemma}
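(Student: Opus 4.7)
The plan is to encode the whole cycle as a single $t$-periodic sequence of letters and then observe that a geometric sum over $n$-th roots of unity collapses to zero.

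First I would unpack the edge condition $s[1..n) = t[0..n-1)$: it says that $s_{j+1}$ is obtained from $s_j$ by dropping $s_j[0]$ and appending a new letter $b_j := s_{j+1}[n-1]$. I would then define a sequence $(c_i)_{i\geq 0}$ by $c_i := s_0[i]$ for $0 \le i < n$ and $c_{n+j} := b_j$ for $0 \le j < t$, and prove by a one-step induction on $j$ that $s_j[i] = c_{j+i}$ for every $0 \le j \le t$ and $0 \le i < n$. Since the cycle closes, $s_t = s_0$, which forces $c_{i+t} = c_i$ for $0 \le i < n$, so $(c_i)$ extends to a $t$-periodic sequence on $\mathbb{Z}_{\ge 0}$. (The second coordinates $m_j$ play no role in $C$; they merely force $k \mid t$.)

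Second, I would substitute this representation into the definition of $C$ and swap the order of summation:
\[
\sum_{j=0}^{t-1} C(s_j) \;=\; \sum_{j=0}^{t-1}\sum_{i=0}^{n-1} c_{j+i}\,\mu^i \;=\; \sum_{i=0}^{n-1} \mu^i \left(\sum_{j=0}^{t-1} c_{j+i}\right).
\]
By $t$-periodicity, the inner sum equals the same constant $S := \sum_{j=0}^{t-1} c_j$ for every $i$, so the total is $S \cdot \sum_{i=0}^{n-1} \mu^i$, which vanishes because $\mu$ is a primitive $n$-th root of unity.

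There is no real obstacle beyond setting up the sequence $(c_i)$ carefully; once that is in place, the vanishing reduces to the standard identity $\sum_{i=0}^{n-1}\mu^i = 0$. It is worth noting that the argument invokes neither the hypothesis $k \mid n$ of Theorem~\ref{thm:extremal} nor any property of a particular succession rule; it holds for every cycle in $\Gnk$.
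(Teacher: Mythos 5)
Your proof is correct and is essentially the paper's own argument: the paper likewise encodes the cycle as a $t$-periodic letter sequence (setting $w_{i+j}=(s_i)_j$ with indices mod $t$) and then factors the double sum into $\paren{\sum_d w_d}\paren{\sum_j \mu^j}=0$. Your swap of summation order plus periodicity of the length-$t$ window sum is just a cosmetic variant of the paper's change of variables $d=i+j$, so there is nothing substantively different to report.
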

\begin{proof}
    We have that
    \[\sum_{i=0}^{t-1} C(s_i) = \sum_{i=0}^{t-1} \sum_{j=0}^{n-1} (s_i)_j \mu^j.
    \]
    Since the strings $s_i$ form a cycle in the de Bruijn graph, we have that $(s_i)_j = (s_{i+1})_{j-1}$, where the indices of $s$ are taken modulo $t$. Then, $(s_i)_j$ depends only of the sum $i+j$ modulo~$t$.
    Let $w_{i+j} = (s_i)_j$ with the indices of $w$ taken modulo $t$. We can rewrite the expression as
    \[\sum_{i=0}^{t-1} C(s_i) = \sum_{i=0}^{t-1} \sum_{j=0}^{n-1} w_{i+j} \mu^j.
    \]
    With a change of variables $d = i+j$, we get
    \[\sum_{i=0}^{t-1} C(s_i) = \sum_{d=0}^{t-1} \sum_{j=0}^{n-1} w_d \mu^j
        = \paren{\sum_{d=0}^{t-1} w_d} \paren{\sum_{j=0}^{n-1} \mu^j}.
    \]
    And the sum of the powers of a primitive root is $0$, so we are done.
\end{proof}
\begin{lemma}
    \label{thm:lagarto}
    Let $s$ and $t$ be two strings that are connected by an arc in the de Bruijn graph~$\Gast{n}{1}$. Then $C(s) - C(\rot_n^{-1}(t)) \in \R$, and it is zero exactly when $s = \rot_n^{-1}(t)$.
\end{lemma}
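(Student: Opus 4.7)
The plan is to unfold both strings explicitly and observe that $\rot_n^{-1}(t)$ is designed precisely so that it differs from $s$ only in position $0$, which collapses the difference of the Fourier sums to a single real term.

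First, I would parametrize the arc condition. Writing $s = a_0 a_1 \cdots a_{n-1}$, the existence of an arc from $s$ to $t$ in the de Bruijn graph forces $t[0..n-1) = s[1..n)$, so $t = a_1 a_2 \cdots a_{n-1} a$ for some $a \in \Sigma$. Since $\rot_n$ sends $c_0 c_1 \cdots c_{n-1}$ to $c_1 \cdots c_{n-1} c_0$, the inverse rotation $\rot_n^{-1}$ sends $c_0 c_1 \cdots c_{n-1}$ to $c_{n-1} c_0 c_1 \cdots c_{n-2}$. Applying this to $t$ yields $\rot_n^{-1}(t) = a\, a_1 a_2 \cdots a_{n-1}$.

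Next, I would compute the difference of Fourier sums directly. The strings $s$ and $\rot_n^{-1}(t)$ coincide in every coordinate $i = 1, \ldots, n-1$, and differ only at coordinate $0$, where $s$ has $a_0$ and $\rot_n^{-1}(t)$ has $a$. Hence
\[
    C(s) - C(\rot_n^{-1}(t)) = (a_0 - a)\mu^0 = a_0 - a,
\]
interpreting $a_0, a \in \Z/b\Z$ by their integer representatives in $\{0,1,\ldots,b-1\}$. This difference is a real integer, establishing the first assertion. Moreover, it vanishes precisely when $a_0 = a$, which is exactly the condition that $s = \rot_n^{-1}(t)$, establishing the second assertion.

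There is no real obstacle here: the lemma is essentially a tautology once one writes out $\rot_n^{-1}(t)$ using the arc condition, since the Fourier transform $C$ is linear and all but one of the coordinates cancel. The only point demanding minor care is the convention identifying $\Sigma = \Z/b\Z$ with integer representatives so that $a_0 - a$ is unambiguously a real number.
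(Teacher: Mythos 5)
Your proof is correct and follows essentially the same route as the paper: write $t = a_1\cdots a_{n-1}a$ via the arc condition, compute $\rot_n^{-1}(t) = a\,a_1\cdots a_{n-1}$, and observe that the two Fourier sums cancel in all coordinates except coordinate $0$, leaving the real difference $a_0 - a$, which vanishes exactly when $s = \rot_n^{-1}(t)$. Your extra remark about fixing integer representatives of $\Sigma = \Z/b\Z$ is a reasonable clarification of a convention the paper leaves implicit.
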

\begin{proof}
    Since $s$ and $t$ are connected in the de Buijn graph, we can write
    \[s = a_0a_1\cdots a_{n-1}\]
    \[t = a_1a_2\cdots a_{n}.\]
    Then,
    \[\rot_n^{-1}(t) = a_n a_1\cdots a_{n-1}.
    \]
    Expanding the definition of $C(r^{-1}(t))$ we get
    \[C(\rot_n^{-1}(t)) = a_n \mu^0 + a_1 \mu^1 + a_2 \mu^2 + \cdots + a_{n-1} \mu^{n-1}.\]
    And for $C(s)$ we get
    \[C(s) = a_0 \mu^0 + a_1 \mu^1 + a_2 \mu^2 + \cdots + a_{n-1} \mu^{n-1}.
    \]
    So, we have $C(s) - C(\rot_n^{-1}(t)) = a_0 - a_n$, which is a real number that is zero only when $a_0 = a_n$, which is precisely when $s = \rot_n^{-1}(t)$.
\end{proof}

To prove that the factor produced by the Pure Cycling Register is extremal, we  choose for each cycle in the factor $\Fac{\rot_n}{k}$ a distinguished vertex, and then we prove that any cycle in any factor has at least one distinguished vertex,
therefore the size of any factor is at most the number of distinguished vertices.

Let $(s_0, m_0), (s_1, m_1), \cdots, (s_{t-1}, m_{t-1})$ be any cycle in $\Fac{\rot_n}{k}$.     There are two possibilities.

One possibility is that
the transform $C(s_i)$ is real for all $s_i$. 
In this case we  take any arbitrary vertex in the factor as the distinguished vertex.

The other possibility is that there exists some string $s_i$ such that $C(s_i)$ is not real.
Let $z = C(s_0)$. Due to Lemma~\ref{thm:caballo}, we have that $C(s_i) = z\mu^{-i}$. Since the length of any PCR cycle is a divisor of $\lcm(n, k) = n$
and $z\mu^{-i}$ has a cycle length equal to the order of $\mu$ (which is $n$), the size of the factor must be $t=n$, and the transforms of its strings form a regular $n$-sided polygon on the complex plane.

\begin{figure}[h]
    \begin{center}
        \begin{subfigure}{0.5\textwidth}
            \includegraphics[width=0.99\linewidth]{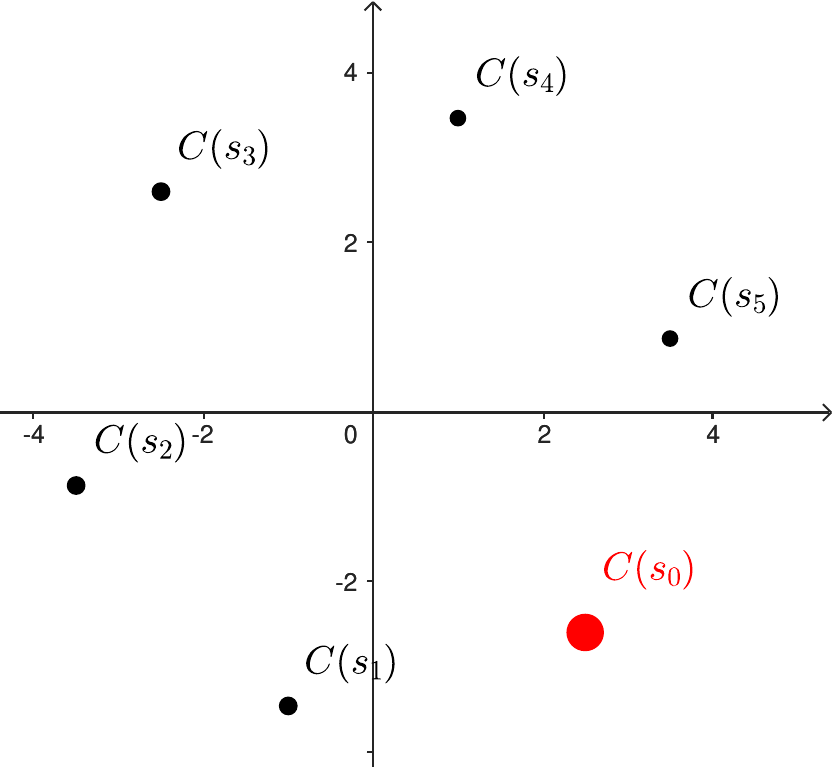}
        \end{subfigure}
    \end{center}
    \caption{Transforms of the strings on the PCR cycle generated by $s_0=123351$. The large red point is the distinguished vertex for this PCR cycle.}
    \label{fig:rata}
\end{figure}

The distinguished vertex of the factor will be the unique vertex $(s_i, m_i)$ such that \[\Im {C(s_i)} < 0 \quad \text{but} \quad \Im {C(s_{i-1})} \geq 0,\] as exemplified in Figure~\ref{fig:rata}.
Now we have to prove that every cycle in the astute graph $\Gast{n}{k}$ contains at least one distinguished vertex. Let \[(s_0, m_0), (s_1, m_1), \cdots, (s_{t-1}, m_{t-1})\]
be any such cycle.
We consider three cases:

\paragraph{First case: There exists some string $s_i$ such that $C(s_i)$ is not real.}
The sum of $C(s_i)$ over all $i$ must be zero due to Lemma~\ref{thm:pez}. So, if the transforms are not always real, there must be a string where the imaginary part of the transform is positive and another where the imaginary part is negative. In particular, let $s_i$ be any string such that $\Im {C(s_i)} < 0$ but $\Im {C(s_{i-1})} \geq 0$.
Since $s_{i-1}$ and $s_i$ are connected by an arc, Lemma~\ref{thm:lagarto} implies that $C(s_{i-1})$ and $C(\rot_n^{-1}(s_i))$ differ by a real number. And since $\Im {C(s_{i-1})} \geq 0$, then $\Im{C(\rot_n^{-1}(s_i))} \geq 0$ as well. This means that in the PCR cycle
\[(s_i, m_i), (\rot_n(s_i), m_i+1), (\rot_n^2(s_i), m_i+2), \dots, (\rot_n^{n-1}(s_i), m_i + n - 1) = (\rot_n^{-1}(s_i), m_i - 1)\]
the distinguished vertex will be $(s_i, m_i)$, which is a vertex in the original cycle
\[(s_0, m_0), (s_1, m_1), \cdots, (s_{t-1}, m_{t-1}).\]

\paragraph{Second case: The transform $C(s_i)$ is $0$ for all $i$.}
Due to Lemma~\ref{thm:lagarto}, $s_i = \rot_n^{-1}(s_{i+1})$ if and only if $C(s_i) = C(\rot_n^{-1}(s_{i+1}))$. By Lemma~\ref{thm:caballo}, $C(\rot_n^{-1}(s_{i+1})) = \mu C(s_{i+1}) = \mu \cdot 0 = 0$
, so $s_i = \rot_n^{-1}(s_{i+1})$ for all $i$. Hence,  the cycle is actually a PCR cycle, so it must have at least one distinguished vertex.

\paragraph{Third case: The transform $C(s_i)$ is always real, but not always $0$.}
Let $C(s_i) \in \R - \{0\}$. Due to Lemma~\ref{thm:lagarto}, $C(\rot_n^{-1}(s_i)) = \mu C(s_i)$ differs from $C(s_{i-1})$ by a real number. If $n > 2$, $\mu$ is a non-real complex number, and therefore $\mu C(s_i)$ also has a non-zero imaginary component, which is equal to that of $C(s_{i-1})$. This is a contradiction, because we assumed $C(s_j)$ was real for all $j$.
So we only have to analyze the cases $n=1$ and $n=2$. In both cases Theorem~\ref{thm:extremal} is implied by the fact that the PCR cycle of a vertex is the smallest possible cycle it belongs to in the astute graph.

We showed that every cycle in  graph $G_{n,k}$
contains at least one distinguished vertex of a cycle in the factor determined by the Pure Cycling Register rule, Theorem \ref{thm:caballo}
is proved.

\section{Proof of Theorem~\ref{thm:count}}

\subsection{Burnside's Lemma}
Our main tool for counting the number of cycles in a succession rule is the classical Burnside's Lemma~\cite{burnside}.
It states that for any finite group~$G$ acting on a set~$S$, the following identity holds:
\[|S/G| = \frac{1}{|G|}~\sum_{g\in G} |S^g|,
\]
where $S/G$ is the set of orbits of $S$ under the action of $G$, and $S^g$ is the subset of $S$ fixed by the action of $g$.

When considering succession rules, the set $S$ is the set of vertices of an astute graph $S=\Vnk$ and $G = \gen{\Act{\sigma}{k}}$ is the group generated by the action associated with a succession rule $\sigma$. In that case, the set of orbits $S/G$ coincides with the factor $\Fac{\sigma}{k}$. We thus have the following identity:

\begin{equation} 
    \label{eqn:patorde}
    |\Fac{\sigma}{k}| = \frac{1}{\orde } \sum_{i=0}^{\orde -1} |\Fix(\Act{\sigma}{k}^i)|,
\end{equation}

where $\orde$ is the order of $\Act{\sigma}{k}$ and $\Fix(f)$ is the set of fixed points of the function $f$.

Notice that the function $i \mapsto |\Fix(\Act{\sigma}{k}^i)|$ is defined over all the integers, and it is cyclic because $\Act{\sigma}{k}^{i} = \Act{\sigma}{k}^{i+\orde }$ for all $i\in\mathbb{Z}$. Therefore,  Equation~\eqref{eqn:patorde} asserts that the size of $\Fac{\sigma}{k}$ is the average of the function $i \mapsto |\Fix(\Act{\sigma}{k}^i)|$ over one cycle.
This average does not depend on which cycle is picked, because they all coincide with the average of the function $i \mapsto |\Fix(\Act{\sigma}{k}^i)|$ in the range $[0, t]$ when $t$ tends to infinity. This gives rise to the following lemma.

\begin{lemma}
    \label{thm:osorde}
    Let $k$ be a positive integer and $\sigma : \Sigma^n \to \Sigma^n$ a succession rule. Let $\orde$ be any positive integer such that
    \[|\Fix(\Act{\sigma}{k}^{i})| = |\Fix(\Act{\sigma}{k}^{i+\orde })|, \quad \forall i \in \mathbb{Z}.\]
    Then,
    \[|\Fac{\sigma}{k}| = \frac{1}{\orde } \sum_{i=0}^{\orde -1} |\Fix(\Act{\sigma}{k}^i)|.\]
\end{lemma}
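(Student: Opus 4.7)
The plan is to combine Burnside's Lemma with a simple averaging observation: the mean of an integer-indexed periodic function over one period does not depend on which period is chosen. Define $f : \mathbb{Z} \to \mathbb{Z}_{\geq 0}$ by $f(i) := |\Fix((\Act{\sigma}{k})^i)|$, and let $t$ denote the (finite) order of the permutation $\Act{\sigma}{k}$ on $\Vnk$. Then $f$ is periodic with period $t$, and Burnside's Lemma, as in Equation~\eqref{eqn:patorde}, gives
\[
    |\Fac{\sigma}{k}| \;=\; \frac{1}{t}\sum_{i=0}^{t-1} f(i).
\]
The task reduces to showing that the right-hand side equals $\frac{1}{\orde}\sum_{i=0}^{\orde-1} f(i)$ for the hypothesised $\orde$.

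The set of periods of $f$, $P := \set{p \in \mathbb{Z} : f(i+p) = f(i) \text{ for all } i}$, is a subgroup of $\mathbb{Z}$ (it is closed under negation and under sums), and it contains the nonzero element $t$; therefore $P = d\mathbb{Z}$ for some positive integer $d$. Both $t$ and the $\orde$ of the hypothesis belong to $P$, so $d \mid t$ and $d \mid \orde$.

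For any positive multiple $p = qd$ of $d$, periodicity of $f$ with period $d$ lets me split the sum into $q$ blocks of length $d$:
\[
    \sum_{i=0}^{p-1} f(i) \;=\; \sum_{j=0}^{q-1}\sum_{i=0}^{d-1} f(jd+i) \;=\; q\sum_{i=0}^{d-1} f(i),
\]
so $\frac{1}{p}\sum_{i=0}^{p-1} f(i) = \frac{1}{d}\sum_{i=0}^{d-1} f(i)$. Applying this identity once with $p=t$ and once with $p=\orde$ shows that both averages equal the common value $\frac{1}{d}\sum_{i=0}^{d-1} f(i)$, so they are equal to each other; combined with Burnside this proves the lemma.

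There is no real obstacle here: the content is the elementary fact that the average of a periodic integer-indexed sequence over any integer period is well-defined. The paper itself hints at an alternative route via the long-run limit $\lim_{T\to\infty} (1/T)\sum_{i=0}^{T-1} f(i)$, which converges to both candidate averages and hence identifies them; the finite chunking argument above is slightly more direct and avoids invoking a limit.
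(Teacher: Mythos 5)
Your proof is correct and follows essentially the same route as the paper: Burnside's Lemma plus the observation that the average of the periodic function $i \mapsto |\Fix(\Act{\sigma}{k}^i)|$ over one period is independent of which period is used. The only (harmless) difference is in how that independence is justified --- you use the subgroup of periods $d\mathbb{Z}$ and split the sums into blocks of length $d$, while the paper identifies both averages with the long-run limit $\lim_{T\to\infty}\frac{1}{T}\sum_{i=0}^{T-1}|\Fix(\Act{\sigma}{k}^i)|$.
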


For the de Bruijn case, since we identify $\Vast{n}{1}$ with $\Sigma^n$, we have that $\Act{\sigma}{1} = \sigma$, and therefore $|\Fix(\Act{\sigma}{1}^i)| = |\Fix(\sigma^i)|$. Let us analyze $|\Fix(\Act{\sigma}{k}^i)|$ in the astute case.

Let $(s, j) \in \Vnk$ be any vertex fixed by $\Act{\sigma}{k}^i$. We have that
\[(s, j) = \Act{\sigma}{k}^i(s, j) = (\sigma^i(s), j+i).
\]
So $(s, j)$ is a fixed point of $\Act{\sigma}{k}^i$ if and only if  $s$ is a fixed point of $\sigma^i$ and $k | i$, which gives us the following identity:
\[|
    \Fix(\Act{\sigma}{k}^i)| = k \One{k|i} |\Fix(\sigma^i)|,
\]
where $\One{p}$
is $1$ when $p$ is true, and $0$ otherwise.
Rewriting Lemma~\ref{thm:osorde} with this identity we obtain the following.

\begin{lemma}
    \label{thm:panda}
    Let $k$ be a positive integer and $\sigma : \Sigma^n \to \Sigma^n$ a succession rule. Let $\orde$ be any positive integer such that
    \[k \One{k|i}|\Fix(\sigma^i)| = k \One{k|i+\orde } |\Fix(\sigma^{i+\orde })|,  \quad \forall i \in \mathbb{Z}.\]
    Then,
    \[|\Fac{\sigma}{k}| = \frac{k}{\orde } \sum_{i=0}^{\orde -1} \One{k|i} |\Fix(\sigma^i)|.\]
\end{lemma}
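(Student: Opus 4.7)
}
The plan is to obtain Lemma~\ref{thm:panda} as a direct corollary of Lemma~\ref{thm:osorde}, by substituting into it the identity $|\Fix(\Act{\sigma}{k}^i)| = k\,\One{k|i}\,|\Fix(\sigma^i)|$ derived immediately before the statement. No new combinatorial reasoning is needed; the argument is essentially a rewriting.

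First I would verify that the hypothesis of Lemma~\ref{thm:osorde} is met. That lemma requires $|\Fix(\Act{\sigma}{k}^{i})| = |\Fix(\Act{\sigma}{k}^{i+\orde})|$ for every $i\in\mathbb{Z}$. Applying the identity $|\Fix(\Act{\sigma}{k}^i)| = k\,\One{k|i}\,|\Fix(\sigma^i)|$ to both sides, this condition translates exactly into
\[
k\,\One{k|i}\,|\Fix(\sigma^i)| \;=\; k\,\One{k|i+\orde}\,|\Fix(\sigma^{i+\orde})|,\quad \forall i\in\mathbb{Z},
\]
which is the hypothesis assumed in Lemma~\ref{thm:panda}. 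Hence Lemma~\ref{thm:osorde} applies with this choice of $\orde$.

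Next I would invoke the conclusion of Lemma~\ref{thm:osorde}, obtaining
\[
|\Fac{\sigma}{k}| \;=\; \frac{1}{\orde}\sum_{i=0}^{\orde-1} |\Fix(\Act{\sigma}{k}^i)|,
\]
and substitute the identity once more on the right-hand side to get
\[
|\Fac{\sigma}{k}| \;=\; \frac{1}{\orde}\sum_{i=0}^{\orde-1} k\,\One{k|i}\,|\Fix(\sigma^i)| \;=\; \frac{k}{\orde}\sum_{i=0}^{\orde-1} \One{k|i}\,|\Fix(\sigma^i)|,
\]
as claimed.

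Since the lemma is a pure reformulation, there is no real obstacle: the only thing one has to be careful about is making sure the identity for $|\Fix(\Act{\sigma}{k}^i)|$ is valid for all integers $i$ (including negative ones), which follows from the cyclic nature of $\Act{\sigma}{k}$, and that the periodicity hypothesis is stated for all $i\in\mathbb{Z}$ rather than only nonnegative $i$, so that it transfers cleanly through the substitution.
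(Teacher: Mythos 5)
Your proposal is correct and matches the paper's own derivation: the paper obtains Lemma~\ref{thm:panda} precisely by substituting the identity $|\Fix(\Act{\sigma}{k}^i)| = k\,\One{k|i}\,|\Fix(\sigma^i)|$ (derived from the fixed-point analysis of $\Act{\sigma}{k}^i$) into both the hypothesis and the conclusion of Lemma~\ref{thm:osorde}. Nothing further is needed.
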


\subsection{GCDs of certain families of polynomials}

Here we deal with the families of polynomials $X^n-1$ and $U_n = \frac{X^n-1}{X-1}$. Although in
an arbitrary ring not every pair of polynomials has a GCD,
we  show that any pair of polynomials in these classes has a GCD, and we compute it explicitly.
\begin{lemma}
    \label{lemma:unum}
    $(U_n : U_m) = U_{(n : m)}$.
\end{lemma}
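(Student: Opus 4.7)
The plan is to run a Euclidean-algorithm style reduction on the pair of indices $(n,m)$. The two key ingredients are the telescoping identity $U_n = U_m + X^m\, U_{n-m}$ for $n > m$ (obtained by splitting $1 + X + \cdots + X^{n-1}$ after the $m$-th term), together with the fact that $X^m$ and $U_m$ are coprime in $\Sigma[X]$, which I would extract from the Bezout relation $U_m - X\, U_{m-1} = 1$ (visible because $U_m$ has constant term $1$).

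For the easy inclusion, namely that $U_{\GCD{n}{m}}$ divides both $U_n$ and $U_m$, I would use that whenever $d \mid n$, writing $n = dq$ and regrouping the geometric sum gives $U_n = U_d \cdot (1 + X^d + X^{2d} + \cdots + X^{(q-1)d})$. For the converse, suppose $P \in \Sigma[X]$ divides both $U_n$ and $U_m$. The splitting identity yields $P \mid X^m U_{n-m}$. Raising the Bezout relation to the $m$-th power and expanding produces an explicit certificate $1 = X^m A + U_m B$ in $\Sigma[X]$ (every binomial term contains either a factor of $X^m$ or a positive power of $U_m$); multiplying by $U_{n-m}$ and using $P \mid U_m$ and $P \mid X^m U_{n-m}$ shows $P \mid U_{n-m}$. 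So $P$ is a common divisor of the pair $(U_m, U_{n-m})$, and iterating this step mirrors the slow Euclidean algorithm on the exponents, terminating at the pair $(U_{\GCD{n}{m}}, U_0) = (U_{\GCD{n}{m}}, 0)$, whence $P \mid U_{\GCD{n}{m}}$.

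Combining the two directions yields both the existence of a greatest common divisor of $U_n$ and $U_m$ in $\Sigma[X]$ and the claimed formula. The main subtlety is that $\Sigma[X] = (\mathbb{Z}/b\mathbb{Z})[X]$ is not in general a principal ideal domain when $b$ is composite, so the usual ``cancel the common factor'' arguments over a field must be replaced by explicit Bezout certificates; fortunately the identity $U_m - X\, U_{m-1} = 1$ provides such a certificate with no additional work, and the rest of the argument is a direct mimic of the integer Euclidean algorithm.
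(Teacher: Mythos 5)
Your proof is correct, and it follows the same overall strategy as the paper (a Euclidean descent on the indices), but it differs in one substantive detail: which end of the geometric sum you peel off. The paper uses the splitting $U_m = X^{m-n}U_n + U_{m-n}$ (for $n<m$), in which the ``remainder'' $U_{m-n}$ appears with no extraneous factor; consequently the set of common divisors of $\{U_n,U_m\}$ is \emph{literally identical} to that of $\{U_n,U_{m-n}\}$ at every step, and no coprimality argument is needed at all. Your splitting $U_n = U_m + X^m U_{n-m}$ leaves the remainder multiplied by $X^m$, which forces you to manufacture the Bezout certificate $1 = X^m A + U_m B$ from $(U_m - XU_{m-1})^m = 1$ in order to strip off the $X^m$. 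That certificate is valid (the $j=0$ term of the binomial expansion carries $X^m$ and every other term carries $U_m$), so your argument goes through, but it is extra work the paper's choice of decomposition avoids. On the other hand, your write-up is more complete in two respects the paper leaves implicit: you prove the easy inclusion $U_{(n:m)} \mid U_n$ explicitly via the regrouping $U_{dq} = U_d\,(1 + X^d + \cdots + X^{(q-1)d})$, and you explicitly confront the fact that $\Sigma[X] = (\mathbb{Z}/b\mathbb{Z})[X]$ need not be a PID, which is exactly the reason the lemma requires proof in the first place.
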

\begin{proof}
    Assuming that $n < m$, we have the following identity:
    \[U_m - X^{m-n}U_n = U_{m-n}.
    \]
    Then, $(U_n : U_m) = (U_n : U_{m-n})$.
    It is also true that $(n:m) = (n, m-n)$. This means that applying the euclidean algorithm over the polynomials $U_n$ and $U_m$ mirrors the steps taken during the application of the algorithm to the pair of integers $n$ and $m$, which implies that the algorithm always terminates, and converges to $U_{(n:m)}$
\end{proof}

\begin{lemma}
    \label{lemma:xnxm}
    $(X^n - 1 : X^m - 1) = X^{(n:m)} - 1$.
\end{lemma}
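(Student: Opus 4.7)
The plan is to imitate the proof of Lemma~\ref{lemma:unum} almost verbatim, replacing $U_n = (X^n-1)/(X-1)$ by $X^n-1$. Without loss of generality assume $n < m$. The key algebraic identity is
\[
(X^m - 1) - X^{m-n}(X^n - 1) = X^{m-n} - 1,
\]
which is pure polynomial arithmetic and therefore holds over any commutative coefficient ring. This shows that the pair $\{X^n-1,\, X^m-1\}$ has the same GCD (and generates the same ideal) as the pair $\{X^n-1,\, X^{m-n}-1\}$. Iterating, the Euclidean reduction applied to $X^n-1$ and $X^m-1$ mirrors step-by-step the integer Euclidean algorithm applied to $n$ and $m$; it terminates, and the surviving polynomial is $X^{(n:m)}-1$.

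To turn this into a rigorous GCD statement I would verify the two defining properties separately. For the divisibility property, writing $d = (n:m)$ and $n = qd$, the factorization
\[
X^n - 1 \;=\; (X^d - 1)\bigl(1 + X^d + X^{2d} + \cdots + X^{(q-1)d}\bigr),
\]
together with its analogue for $X^m-1$, shows that $X^d-1$ divides both. For the universal property, the chain of reductions sketched above yields an explicit Bezout-type presentation $X^d - 1 = A(X^n-1) + B(X^m-1)$ with $A, B \in \Sigma[X]$, so any common divisor of $X^n-1$ and $X^m-1$ necessarily divides $X^d-1$. Combined, these two properties certify $X^{(n:m)}-1$ as a GCD in the sense required.

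The one delicate point, explicitly flagged in the preamble of this subsection, is that $\Sigma = \mathbb{Z}/b\mathbb{Z}$ need not be a field, so arbitrary pairs of polynomials in $\Sigma[X]$ need not admit a GCD at all. This is where I would expect a careless argument to break. Here, however, every reduction step consists solely of multiplying by the monic polynomial $X^{m-n}$ and subtracting; at no stage is a non-unit coefficient inverted. Hence the argument carries through over an arbitrary commutative ring, and I do not anticipate any further obstacle.
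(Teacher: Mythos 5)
Your proof is correct, but it takes a different route from the paper. The paper disposes of this lemma in one line: it deduces it from Lemma~\ref{lemma:unum} by multiplying both sides of $(U_n : U_m) = U_{(n:m)}$ by $X-1$. You instead rerun the Euclidean argument directly on the polynomials $X^n-1$ and $X^m-1$, using the identity $(X^m-1) - X^{m-n}(X^n-1) = X^{m-n}-1$, and then certify the result as a genuine GCD by checking the divisibility property (via the factorization $X^n-1 = (X^d-1)(1 + X^d + \cdots)$) and the universal property (via the Bezout presentation accumulated along the reduction). Your version is self-contained and, if anything, slightly more careful than the paper's: over the non-domain $\Sigma = \mathbb{Z}/b\mathbb{Z}$, the step ``multiply a GCD identity by $X-1$'' is not automatic in general (one should check that both the common-divisor and the universal property survive the multiplication), whereas your explicit Bezout combination and monic-leading-coefficient observation settle both properties without appeal to the $U_n$ case. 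The paper's route buys brevity by reusing Lemma~\ref{lemma:unum}; yours buys independence from it. Both are valid.
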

\begin{proof}
    It follows directly from Lemma~\ref{lemma:unum} by multiplying both sides by $X-1$.
\end{proof}

\begin{lemma}
    \label{lemma:unxm}
    If $\Sigma$ is a field, then
    \[
        (U_n : X^m - 1) = \begin{cases}
            X^{(n:m)}-1 & \text{ if } n / (n:m) \equiv 0 \mod |\Sigma|      \\
            U_{(n:m)}   & \text{ if } n / (n:m) \not\equiv 0 \mod |\Sigma|. \\
        \end{cases}
    \]
\end{lemma}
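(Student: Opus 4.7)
The plan is to show that $(U_n : X^m - 1)$ is sandwiched between $U_d$ and $X^d - 1$ (with $d = (n:m)$), which forces it to equal one of these two polynomials, and then to decide which by evaluating at $X = 1$.

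First, I would establish the divisibility chain $U_d \mid (U_n : X^m - 1) \mid X^d - 1$. The left divisibility follows from $U_d \mid U_n$ (since $d \mid n$ makes $U_n/U_d = 1 + X^d + X^{2d} + \cdots + X^{(n/d - 1)d}$ a polynomial) together with $U_d \mid U_m \mid X^m - 1$ (since $d \mid m$). The right divisibility comes from $U_n \mid X^n - 1$, giving $(U_n : X^m - 1) \mid (X^n - 1 : X^m - 1) = X^d - 1$ by Lemma~\ref{lemma:xnxm}.

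Second, I would use the factorization $X^d - 1 = (X - 1) U_d$ together with the irreducibility of the linear polynomial $X - 1$ in $\Sigma[X]$ to argue that the only monic divisors of $X^d - 1$ that are multiples of $U_d$ are $U_d$ and $X^d - 1$ themselves. Indeed, any such divisor has the form $U_d B$, and cancelling $U_d$ in $U_d B \mid U_d(X - 1)$ (valid in the UFD $\Sigma[X]$) forces $B$ to be $1$ or $X - 1$.

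Third, to decide between the two possibilities I would check when the gcd equals $X^d - 1$, i.e., when $(X - 1) U_d \mid U_n$ (the divisibility into $X^m - 1$ is automatic since $d \mid m$). Dividing by $U_d$ reduces this to $(X - 1) \mid U_n / U_d$, and evaluating the polynomial $U_n / U_d = 1 + X^d + X^{2d} + \cdots + X^{(n/d - 1)d}$ at $X = 1$ yields $n/d$. Hence the gcd equals $X^d - 1$ exactly when $n/d \equiv 0 \pmod{|\Sigma|}$, and equals $U_d$ otherwise, which is precisely the stated case split.

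The only slightly delicate point is the sandwich argument, and in particular the cancellation $U_d B \mid U_d(X - 1) \Rightarrow B \mid X - 1$ that relies on $\Sigma[X]$ being a UFD; once the bracket $U_d \mid (U_n : X^m - 1) \mid X^d - 1$ is in place, the dichotomy and its resolution via $(U_n/U_d)(1) = n/d$ are elementary.
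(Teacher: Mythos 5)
Your proof is correct and follows essentially the same route as the paper: both arguments sandwich the gcd between $U_{(n:m)}$ and $X^{(n:m)}-1$ (the paper phrases this as an ideal containment $(X^n-1,X^m-1)\subseteq(U_n,X^m-1)\subseteq(U_n,U_m)$ in the PID $\Sigma[X]$, invoking Lemmas~\ref{lemma:unum} and~\ref{lemma:xnxm}), and both resolve the resulting dichotomy by reducing it to whether $X-1$ divides $U_n/U_{(n:m)}=U_{n/(n:m)}(X^{(n:m)})$, checked by evaluation at $X=1$, which gives $n/(n:m)$ in $\Sigma$. Your explicit cancellation argument for why only the two candidates $U_{(n:m)}$ and $X^{(n:m)}-1$ can occur is a detail the paper leaves implicit.
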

\begin{proof}

    Since $\Gamma[X]$ is a principal ideal and
    $
        (X^n-1, X^m-1) \subseteq
        (U_n, X^m-1) \subseteq
        (U_n, U_m)
    $
    we only need to decide whether $(U_n, X^m-1)$ is generated by $X^{(n:m)}-1$ or by $U_{(n:m)}$. The former is true if and only if $X^{(n:m)}-1 \mid U_n$.

    Since
    \[
        U_n
        = \frac{X^{(n:m)} - 1}{X-1}
        U_{n/(n:m)}(X^{(n:m)})
    \]
    we get  $X^{(n:m)}-1 \mid U_n$ if and only if
    $1$ is a root of $U_{n/(n:m)}(X^{(n:m)}).$ The latter is equivalent to $n/(n:m)=U_{n/(n:m)}(1)=0$ in $\Gamma.$

\end{proof}

\subsection{Affine Succession Rules}

In this section we  compute the size of the set $\Fix(\sigma^k) \inn \Gamma^n$ for an affine succession rule $\sigma : \Gamma^n \to \Gamma^n$ given by an affine relation $R$.
For any $s \in \Fix(\sigma^k)$ we define its associated string $w \in \Gamma^k$ by
\[w_i = (\sigma^i(s))_0\]
It is clear that $w$ uniquely determines $s$. Indeed, due to the definition of succession rule,
\[
    s_i = (\sigma^i(s))_0 = w_{(i \mod k)},\]
this identity holds only because
$s \in \Fix(\sigma^k)$.
Hence, the strings $\sigma^i(s)$ repeat modulo $k$.
Equivalently, if we write $w^* = w w w w w \cdots$ as the infinite concatenation of $w$ with itself, the above claim states that $s = \Substr{w^*}{0}{n}$. A similar argument shows that $\sigma^i(s) = \Substr{w^*}{i}{n+i}$. Since $\sigma$ is an affine succession rule we have  that every $n+1$ consecutive symbols in $w^*$ satisfy the affine relation $R$. This condition can be encoded
in the following polynomial series equation:
\begin{equation}
    \label{eqn:tigre}
    \exists p :\deg(p) \leq n \text{ and }\quad \frac{w}{1 - X^k} \Lambda = p + \frac{c}{1 - X},
\end{equation}
where $\Lambda$ is the characteristic polynomial of $R$, $c$ is its constant term, and we identify the string $w$ with its generating polynomial. Under this identification, $\frac{w}{1 - X^k}$ is the generating function of the infinite string $w^*$.

The coefficient of degree $i$ in $\frac{w}{1 - X^k} \Lambda$ is the linear combination
\[
    \sum_{j=0}^n \lambda_j w^*_{i-n+j}.
\]

Thus, when $i \leq n$ we cannot guarantee that the result of this linear combination is $c$, and we need to introduce an ``error'' polynomial $p$ of degree at most $n$.

If we multiply both sides of Equation~\eqref{eqn:tigre} by $1-X^k$ we get the equivalent expression:
\[\exists p :\deg(p) \leq n \text{ and }\quad w \Lambda = p(1 - X^k) + c \Polu_k\]
which is, by definition, the same as
\begin{equation}
    \label{eqn:alga}
    w \Lambda \equiv c \Polu_k \mod 1 - X^k.
\end{equation}
We claim that \emph{any} string $w \in \Gamma^k$ that satisfies Equation~\eqref{eqn:alga} is the associated string of some other string $s \in \Fix(\sigma^k)$. Indeed, take any $w$ that satisfies~\eqref{eqn:alga}. Since~\eqref{eqn:alga} is equivalent to~\eqref{eqn:tigre}, every substring of length $n+1$ of $w^*$ will satisfy the relation $R$. Then, if we take $s = \Substr{w^*}{0}{n}$ then $\sigma^i(s) = \Substr{w^*}{i}{n+i}$, which will be cyclic modulo $k$, because $w^*$ is cyclic modulo $k$.

The established bijection implies that $|\Fix(\sigma^k)|$ is the number of solutions to the Equation~\eqref{eqn:alga}. Since $w$ is always of length $k$, we can assume $w$ is a polyomial in the quotient ring $\Gamma[X]/(1 - X^k)$ and the number of solutions of the equation will not change.

For an arbitrary polynomial $P \in \Sigma[X] / (X^k-1)$, the system of equations
\[\Lambda w \equiv P \quad \mod X^k - 1\]
has a solution if and only if
\[u \cdot \Lambda - P = v \cdot (X^k-1)\]
for some polynomials $u, v$. This is equivalent to the condition
\[P \in (\Lambda, X^k-1).
\]
Furthermore, if the system does have a solution, it has the same number of solutions as the associated linear system
\[
    \Lambda w \equiv 0 \quad \mod X^k - 1.
\]
Since there are $|\Sigma[X] / (X^k - 1)|$ possibilities for $w$, and each possibility for $P$ gets an equal number of solutions, each $P$ gets exactly
\[
    \frac{|\Sigma[X] / (X^k - 1)|}{|(\Lambda, X^k-1) / (X^k - 1)|}
    = |\Sigma[X] / (\Lambda, X^k - 1)|
\]
solutions to the linear system, if there is at least one.
This implies that
\begin{equation}
    \nonumber
    \label{eqn:abeja}
    |\Fix(\sigma^k)| = \ideal{k} \One{c\Polu_k \in (\Lambda, X^k - 1)}.
\end{equation}

Let us further analyze the condition $c\Polu_k \in (\Lambda, X^k - 1)$. Let
\[
    S = \{k \in \mathbb{N} : c\Polu_k \in (\Lambda, X^k - 1)\}.
\]
We prove that $S$ the set of all multiples of its least element ${\ell_\sigma}$.
Let $d$ be any multiple of ${\ell_\sigma}$. Since ${\ell_\sigma} \in S$ we have that:
\[c\Polu_{{\ell_\sigma}} \in (\Lambda, X^{\ell_\sigma} - 1) = (\Lambda, (X-1)\Polu_{{\ell_\sigma}})\]
The condition ${\ell_\sigma} | d$ implies $\Polu_{\ell_\sigma} | \Polu_d$ and therefore it also holds that
\[c\Polu_d \in \frac{\Polu_d}{\Polu_{\ell_\sigma}}(\Lambda, (X-1)\Polu_{\ell_\sigma}) \inn \left(\Lambda, \frac{\Polu_d}{\Polu_{\ell_\sigma}}(X-1)\Polu_{\ell_\sigma}\right) = (\Lambda, (X-1)\Polu_d)= (\Lambda, X^d - 1).\]
Hence, $d \in S$.

Now take any $d \in S$, and let $g = \gcd(d, {\ell_\sigma})$. Then $U_g \in (U_{\ell_\sigma}, U_d)$ due to theorem \ref{lemma:unum}. We also have that
\[cU_{\ell_\sigma} \in (\Lambda, X^{\ell_\sigma} - 1) \quad \text{and} \quad cU_d \in (\Lambda, X^d - 1).\]
Therefore,
\[c\Polu_g \in (\Lambda, X^{\ell_\sigma} - 1, X^d - 1) = (\Lambda, X^g - 1).\]
Thus, $g \in S$. Since $g | {\ell_\sigma}$ and ${\ell_\sigma}$ is the least element in $S$, then $g = {\ell_\sigma}$ and, as a result, $d$ is a multiple of ${\ell_\sigma}$.

Notice that $\ell_\sigma$ is the smallest-length cycle in the associated succession rule $\sigma$, since it is the first integer for which $\Fix(\sigma^{\ell_\sigma})$ is nonempty. Observe that when $c = 0$, the zero string always has cycle length one, so $\ell_\sigma = 1$.

We  rewrite Equation~\eqref{eqn:abeja} as
\[
    \Fix(\sigma^k) = \One{\ell_\sigma | k} \cdot \ideal{k}.
\]
Now let us analyze $\ideal{k}$. Since the first coefficient of $\Lambda$ is invertible, the polynomial $X$ is invertible modulo $\Lambda$, so there exists some $\orde$ such that $X^\orde \equiv 1 \mod \Lambda$. Equivalently, $\Lambda | X^\orde - 1$.

Let $k$ be any positive integer and $g = \GCD{\orde}{ k}$. We know that since ${\GCD{k}{ \orde}} | k$, $X^{\GCD{k}{ \orde}} - 1 | X^k - 1$. Therefore,
\[(\Lambda, X^k - 1) \subseteq (\Lambda, X^{\GCD{k}{ \orde}} - 1).
\]
And also,
\[(X^{\GCD{k}{ \orde}}-1) = (X^k-1, X^\orde - 1) \inn (\Lambda, X^k-1).\]
Consequently,  the ideals $(\Lambda, X^k - 1)$ and $(\Lambda, X^{\GCD{k}{ \orde}} - 1)$ coincide.
When we replace this into the formula for $\Fix(\sigma^k)$, we get the following.

\begin{lemma}
    \label{thm:cyc_ideal}
    \[
        \Fix(\sigma^k) = \One{\ell_\sigma | k}\cdot \ideal{\GCD{k}{ \orde}}.
    \]
\end{lemma}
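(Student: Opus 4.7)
The plan is to combine the two preceding developments. In the analysis leading up to this lemma I have already derived
\[
|\Fix(\sigma^k)| = \One{\ell_\sigma \mid k}\cdot \ideal{k}
\]
by parametrizing fixed points of $\sigma^k$ through their associated string $w \in \Sigma^k$ and counting solutions of $\Lambda w \equiv c\,\Polu_k \pmod{X^k-1}$. Hence the only remaining work is to show that the modulus $X^k-1$ in the quotient can be replaced by $X^{\GCD{k}{\orde}}-1$.

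For this I would establish the ideal equality $(\Lambda, X^k-1) = (\Lambda, X^{\GCD{k}{\orde}}-1)$ in $\Sigma[X]$. The inclusion $(\Lambda, X^k-1) \subseteq (\Lambda, X^{\GCD{k}{\orde}}-1)$ is immediate because $X^{\GCD{k}{\orde}}-1$ divides $X^k-1$. For the reverse inclusion I would invoke Lemma~\ref{lemma:xnxm}: its proof performs a Euclidean descent that produces an explicit $\Sigma[X]$-linear combination expressing $X^{\GCD{k}{\orde}}-1$ in terms of $X^k-1$ and $X^\orde-1$. Since $\orde$ is a multiple of the order of $X$ modulo $\Lambda$, we have $\Lambda \mid X^\orde - 1$, so $X^\orde - 1 \in (\Lambda)$ and therefore $X^{\GCD{k}{\orde}}-1 \in (\Lambda, X^k-1)$.

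Once the two ideals coincide, their quotient rings have the same cardinality, so
\[
\ideal{k} \;=\; \ideal{\GCD{k}{\orde}},
\]
and substituting this into the displayed formula for $|\Fix(\sigma^k)|$ yields the claim. The only potentially delicate point is that $\Sigma = \mathbb{Z}/b\mathbb{Z}$ need not be a field, but Lemma~\ref{lemma:xnxm} and its underlying Euclidean argument on the $X^n-1$ family are valid without that hypothesis, so no additional difficulty arises at this step; the proof is essentially a bookkeeping consolidation of what has already been derived.
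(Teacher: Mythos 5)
Your proposal is correct and follows essentially the same route as the paper: starting from the already-established identity $|\Fix(\sigma^k)| = \One{\ell_\sigma \mid k}\cdot\ideal{k}$, both arguments prove the ideal equality $(\Lambda, X^k-1) = (\Lambda, X^{\GCD{k}{\orde}}-1)$, with one inclusion from $X^{\GCD{k}{\orde}}-1 \mid X^k-1$ and the other from the Euclidean combination $(X^{\GCD{k}{\orde}}-1) = (X^k-1, X^\orde-1)$ together with $\Lambda \mid X^\orde-1$. Your remark that the Euclidean descent needs no field hypothesis is consistent with how the paper uses Lemma~\ref{lemma:xnxm} over $\Sigma = \mathbb{Z}/b\mathbb{Z}$.
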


\subsection{Burnside's Lemma for affine necklaces}

Let $k$ be a positive integer and let $\sigma$ be an affine succession rule with its characterisitc polynomial $\Lambda$. Due to Lemma~\ref{thm:panda}, we have that
\begin{equation}
    \label{eqn:pulpo}
    |\Fac{\sigma}{k}| = \frac{k}{M} \sum_{i=0}^{M-1} \One{k|i} |\Fix(\sigma^i)|,
\end{equation}
for any $M$ that is a cycle of the function $i \mapsto \One{k|i} |\Fix(\sigma^i)|$.
Due to Lemma~\ref{thm:cyc_ideal}, we know that if $\orde$ is the order of $X$ modulo $\Lambda$, then
\[\Fix(\sigma^k) = \One{\ell_\sigma | k} \cdot \ideal{\GCD{k}{ \orde}}.
\]
Therefore, $M$ needs to be a cycle of
\[i \mapsto \One{k|i} \cdot \One{\ell_\sigma | i} \cdot \ideal{\GCD{i}{ \orde}}.\]
To be a cycle of a product of functions, it suffices to be a multiple of the cycle length of each factor. So,  we have that one possible value of $M$ is $\lcm(k, \ell_\sigma, \orde)$.
Now rewriting Equation~\eqref{eqn:pulpo} we get

\begin{align*}
    |\Fac{\sigma}{k}| & = \frac{k}{M} \sum_{i=0}^{M-1} \One{k|i} \cdot \One{\ell_\sigma | k} \cdot \ideal{\GCD{i}{ \orde}}
    \\&=\frac{k}{M} \sum_{i=0}^{M-1} \One{\lcm(i, \ell_\sigma)|i} \cdot \ideal{\GCD{i}{ \orde}}.
\end{align*}

Recall that each summand in the Burnside equation corresponds to $\Fix(\Act{\sigma}{k}^i)$, and the size of the ideal vector space $\ideal{\GCD{i}{ \orde}}$ is always positive. Hence, $\Fix(\Act{\sigma}{k}^i)$ is zero if and only if $\One{\lcm(i, \ell_\sigma)|i}$ is zero. We conclude that $\lcm(i, \ell_\sigma)$ is the length of the smallest cycle in the factor $\Fac{\sigma}{k}$. Let $S$ be that cycle length. Rewriting the equation above  we obtain the following:
\begin{align*}
    |\Fac{\sigma}{k}| & = \frac{k}{M} \sum_{i=0}^{M-1} \One{S|i} \cdot \ideal{\GCD{i}{ \orde}}
    \\
                      & = \frac{k}{M} \sum_{i=0}^{M/S-1} \ideal{\GCD{iS}{ \orde}}
    \\
                      & =\frac{k}{M} \sum_{i=0}^{M/S-1} \ideal{\GCD{i}{\orde/\GCD{S}{ \orde}} \cdot \GCD{S}{ \orde}}
    \\
                      & =\frac{k}{S \orde /\GCD{S}{ \orde}} \sum_{i=0}^{\orde /\GCD{S}{ \orde}-1} \ideal{\GCD{i}{\orde/\GCD{S}{ \orde}} \cdot \GCD{S}{ \orde}}.
\end{align*}

The second equality uses that  $S|M$,
and the last equality uses that since
$M = \lcm(S, \orde)$, we have $\orde/\GCD{S}{ \orde} = M/S$.

Since $\gcd(i, \orde/\GCD{S}{ \orde})$ iterates over all divisors of $d$, we can express that sum as follows:
\begin{equation}\nonumber
    |\Fac{\sigma}{k}| = \frac{k\GCD{S}{ \orde}}{S\orde} \sum_{d | \frac{\orde }{\GCD{S}{ \orde}}} \varphi \paren{\frac{\orde }{d\GCD{S}{ \orde}}}\ideal{d \cdot \GCD{S}{ \orde}}.
\end{equation}
This can be rewritten as        \begin{equation}\nonumber
    |\Fac{\sigma}{k}| = \frac{k\GCD{S}{ \orde}}{S \orde} \sum_{\GCD{S}{\orde} | d | \orde} \varphi \paren{\orde / d}\ideal{d}.
\end{equation}
This completes the proof of Theorem~\ref{thm:count}.

\section{Proof of the Corollaries}
\subsection{Proof of Corollary~\ref{cor:coro1}}
\begin{proof}[Proof of Corollary~\ref{cor:coro1}]
    The PCR rule for necklaces of order $n$ is affine, and its associated affine relation is given by
    \[(a_i)_i \in R \iff 0 = a_0 - a_n.
    \]
    Then, its characteristic polynomial is $\Lambda = X^n - 1$.
    Using Theorem~\ref{thm:count},
    \begin{equation}\nonumber
        |\Fac{\rot_n}{k}| = \frac{k\GCD{s}{s\orde}}{\orde} \sum_{\GCD{s}{\orde} | d | \orde} \varphi (\orde/d) \ideal{d}
    \end{equation}
    where
    \begin{itemize}
        \item $\orde$ is the order of $X$ modulo $\Lambda$,

        \item $\varphi$ is Euler's totient function, and

        \item  $s$ is the length of the smallest cycle in the factor. Equivalently, $s$ can be defined as the least multiple of $k$ such that
              \[
                  c(1 + X + \cdots + X^{s-1}) \in (\Lambda, X^s-1).
              \]

              In this case the associated constant $c$ is $0$. That is, the rule is linear. Therefore $s = k$ because the condition $c(1 + X + \cdots + X^{s-1}) \in (\Lambda, X^s-1)$ always holds.
    \end{itemize}
    The order of $X$ modulo $X^n - 1$ is $n$; therefore, $g = \GCD{n}{k}$. Replacing these identities into the formula for $|\Fac{\rot_n}{k}|$ we get

    \begin{equation}\nonumber
        |\Fac{\rot_n}{k}| = \frac{k\GCD{k}{n}}{kn} \sum_{\GCD{k}{n} |d | n} \varphi(n/d) \ideal{d}.
    \end{equation}
    Observe that  $(\Lambda, X^{d}-1) = (X^{\GCD{n}{d}} - 1) = (X^d - 1)$.
    Hence,
    \[
        \left|\frac{\Sigma[X]}{(\Lambda, X^{d} - 1)}\right| =
        \left|\frac{\Sigma[X]}{(X^d - 1)}\right| =
        b^d.
    \]
    Replacing this into the formula for $|\Fac{\rot_n}{k}|$ we get
    \begin{equation}\nonumber
        |\Fac{\rot_n}{k}| = \frac{\GCD{k}{n}}{n} \sum_{\GCD{k}{n} |d | n} \varphi(n/d) b^d.
    \end{equation}
\end{proof}

\subsection{Proof of Corollary~\ref{thm:coro2}}
\begin{proof}[Proof of Corollary~\ref{thm:coro2}]
    For the incremented cycle register case, the associated characteristic polynomial is $\Lambda = X^n - 1$ as in the PCR case, but the constant $c$ is $1$ instead of $0$.
    To specialize Theorem~\ref{thm:count}, we have to find the smallest integer $s$ that is a multiple of $k$ and
    \[
        1 + X + \cdots + X^{s-1} \in (\Lambda, X^s-1).
    \]
    Let $d = \GCD{n}{s}$.
    Notice that $\Lambda = X^n - 1$ and so $(\Lambda, X^s-1) = (X^d - 1)$. Since the ideal is principal, the condition $1 + X + \cdots + X^{s-1} \in (X^d - 1)$ can be checked by reducing $1 + X + \cdots + X^{s-1}$ modulo the polynomial $X^d - 1$ and checking if the result is $0$.

    When we reduce a polynomial modulo $X^d - 1$, the $i$-th coefficient of the reduced polynomial is the sum of all the coefficients of the original polynomial that have degree congruent to $i$ modulo $d$.
    The polynomial $1 + X + \cdots + X^{s-1}$ has all coefficients equal to $1$. So, when reduced modulo $X^d - 1$, each resulting coefficient will be $s/d$, since for each $i \in \{0, \dots d-1\}$ there are $s/d$ indices in $\{0, \dots s\}$ congruent to $i$ modulo $d$.

    In order for $1 + X + \cdots + X^{s-1} \mod X^d - 1$ to be $0$, we need $s/d \equiv 0 \mod b$. So $s$ has to be a multiple of $b$, which we can write as $s = bm$ for some $m$. Furthermore, we need the following condition to hold
    \[s/d = bm / \GCD{bm}{n} \equiv 0 \mod b.\]
    This is equivalent to $\GCD{bm}{n} | m$, which in turn is equivalent to
    \[\GCD{b\frac{m}{\GCD{m}{n}}}{\frac{n}{\GCD{m}{n}}} \Big | \frac{m}{\GCD{m}{n}}.\]
    Notice that $\frac{m}{\GCD{m}{n}}$ is coprime with $\frac{n}{\GCD{m}{n}}$, so
    \[\GCD{b\frac{m}{\GCD{m}{n}}}{\frac{n}{\GCD{m}{n}}} = \GCD{b}{\frac{n}{\GCD{m}{n}}}.\]
    And the only divisor of $\frac{n}{\GCD{m}{n}}$ that is also a divisor of $\frac{m}{\GCD{m}{n}}$ is $1$, so the condition holds only when
    \[\GCD{b}{\frac{n}{\GCD{m}{n}}} = 1,\]
    which is true precisely when $d_b(n) | m$. Since we also require that $s$ be a multiple of $k$, the smallest possible value for $s$ is:
    \[s = \lcm(k, b d_b(n)).
    \]

    We apply Theorem \ref{thm:count} as we did in the proof of Corollary \ref{cor:coro1}, with $\omega=n$  and $        \left|\frac{\Sigma[X]}{(\Lambda, X^{d} - 1)}\right| =
        b^d.$
    For any divisor $d$ of the order $\omega = n$. Replacing this into the formula of Theorem~\ref{thm:count} we get
    \begin{equation}\nonumber
        |\Fac{\iota_n}{k}| = \frac{k\GCD{\lcm(k, b d_b(n))}{n}}{\lcm(k, b d_b(n))n} \sum_{\GCD{\lcm(k, b d_b(n))}{n} | d | n} \varphi (n/d) b^d.
    \end{equation}
\end{proof}

\subsection{Proof of Corollary~\ref{cor:coro3}}

\begin{proof}[Proof of Corollary \ref{cor:coro3}]
    The Xor rule for necklaces of order $n$ is affine, and its associated affine relation is given by
    \[(a_i)_i \in R \iff a_n = a_0 + a_1 + \cdots + a_{n-1}.
    \]
    Consequently, its characteristic polynomial is $\Lambda = 1 + X + \cdots + X^{n-1} - X^n$, which is equal to $\Polu_{n+1}$ when $|\Sigma| = 2$.
    To specialize Theorem~\ref{thm:count}, we need to compute:
    \begin{itemize}
        \item The length of the smallest cycle, which is $1$ since the rule is linear
        \item (A multiple of) the order $\orde$ of $X$ modulo $\Lambda$
        \item The size of $\ideal{d}$ for all divisors $d | \orde$.
    \end{itemize}

    We know that $\Lambda = U_{n+1}$ which divides $X^{n+1} - 1$. Therefore,  $\omega = n+1$ is a multiple of the order of $X$ modulo $\Lambda$.
    Due to Lemma~\ref{lemma:unxm},
    \[(\Lambda, X^d - 1) = ((U_\orde : X^d - 1)) = \begin{cases}
            X^d - 1 & \text{ if } \orde/d \text{ is even} \\
            U_d     & \text{ if } \orde/d \text{ is odd}. \\
        \end{cases}
    \]
    Hence,
    \[\ideal{d} = |\Sigma|^{d - 1 + \One{2 \;\mid\; \orde/d}}.\]
    Replacing this in the statement of Theorem~\ref{thm:count} we get
    \begin{align*}\nonumber
        |\Fac{\sigma}{k}| = & \; \frac{k\GCD{s}{ \orde}}{s\orde } \sum_{\GCD{s}{\orde} | d | \orde} \varphi \paren{\orde / d}\ideal{d}
        \\ =&\; \frac{k}{\orde} \sum_{d | \orde} \varphi \paren{\orde / d} |\Sigma|^{d - 1 + \One{2 \;\mid\; \omega/d}}.
        \\\intertext{If we do a change of variables $d \mapsto \omega/d$, and set $|\Gamma| = 2$ we get}
        =                   & \;
        \frac{k}{\orde} \sum_{d | \orde} \varphi(d) 2^{\One{2 \mid d}} 2^{\orde/d-1}.
        \\\intertext{Since $\varphi(2d) = d$ when $d$ is even and $\varphi(2d)=2d$ when $d$ is odd, $\varphi(d) 2^{\One{2 \mid d}}$ reduces to $\varphi(2d)$}
        =                   & \;
        \frac{k}{2\orde} \sum_{d | \orde} \varphi(2d) 2^{\orde/d}.
    \end{align*}

\end{proof}

\bibliographystyle{plain}

\bibliography{extremal.bib}

{\small
\noindent
Nicol\'as Álvarez \\
ICC CONICET Argentina -  {\tt  nico.alvarez@gmail.com}
\medskip

\noindent
Ver\'onica Becher \\
Departamento de  Computaci\'on, Facultad de Ciencias Exactas y Naturales \& ICC  \\
Universidad de Buenos Aires \&  CONICET  Argentina-  {\tt  vbecher@dc.uba.ar}
\medskip

\noindent
Martín Mereb \\
Departamento de Matemática, Facultad de Ciencias Exactas y Naturales \& IMAS \\
Universidad de Buenos Aires \&  CONICET Argentina-  {\tt  mmereb@gmail.com}
\medskip

\noindent
Ivo Pajor\\
Departamento de Computaci\'on, Facultad de Ciencias Exactas y Naturales \\
Universidad de Buenos Aires \&  Argentina-  {\tt  pajorivo@gmail.com}
\medskip

\noindent
Carlos Miguel Soto\\
Departamento de Computaci\'on, Facultad de Ciencias Exactas y Naturales \\
Universidad de Buenos Aires \&  Argentina-  {\tt  miguelsotocarlos@gmail.com}
}
\end{document}